\numberwithin{equation}{subsection} 
\theoremstyle{plain}
\newtheorem{theorem}{Theorem}[section]
\newtheorem{lemma}[theorem]{Lemma}
\newtheorem{definition}[theorem]{Definition}
\newtheorem{example}[theorem]{Example}
\theoremstyle{definition}
\theoremstyle{remark}
\begin{document}

\title[On The Weak Order of Orthogonal Groups] {On The Weak Order of Orthogonal Groups }

\author {Annette Pilkington}
 \address{Department of Mathematics\\ University of Notre
Dame\\ Room 255 Hurley Building \\ Notre Dame, Indiana, 46556}
\email{Pilkington.4@nd.edu}

\keywords{ Lattice, Groupoid, Root System, Orthogonal Group,  Convex Cone, Weak Order.  }

\maketitle    

\begin{abstract}
A structure of a complete lattice (in the sense of a poset)    is defined  on the underlying set of the orhtogonal group  of a real Euclidean space, by a construction  analogous to that of the weak order of   a Coxeter system in terms of its root system.  This gives rise to a complte rootoid
in the sense of Dyer, with the orthogonal group as underlying group.
\end{abstract}

\section{Introduction}

  The weak order of a Coxeter system $(W,S)$ (see Bj{\"o}rner and Brenti \cite{BnB})  is a partial order on the underlying set of $W$  which is important in  the basic combinatorics of $W$. For example, maximal chains in weak order from the identity element to $w\in W$  correspond bijectively to the reduced expressions of $w$. It is known that the weak order of $W$ is a complete meet semi-lattice, which is a complete lattice if $W$ is finite.
 In this paper, we construct  an analogous order associated to the orthogonal group of a real Eucliden space and show that it  is  a complete lattice.

A more  general framework for studying such orders  has  has been developed by  Dyer \cite{Dyer}.
In \cite{Dyer} 
 A signed group-set is defined to be a pair   $(G,\Phi)$ satisfying the following conditions:
 
 \begin{enumerate}\item[(1)]  $G$ is a group
 \item[(2)]  $\Phi$ is a set with  a given left action $(g,x)\mapsto gx$ by the product  group $G\times \{\pm 1\}$. This action defines a pair of commuting left actions, by $G$  and $\{\pm 1\}$ on $\Phi$.
 \item[(3)] There is a specified partition  $\Phi=\Phi^{+}\cup \Phi^{-}$ of $\Phi$ in to two disjoint subsets which are interchanged by the action of $-1$.
 \end{enumerate}
 
 This is, in fact, a special case  of a more general notion of ``signed groupoid-set''   which is not defined  here. We call  $\Phi$ the root system of $(G,\Phi)$  and $\Phi^{+}$ the  set of positive roots. For every $g\in G$, we define the subset 
 $ \Phi_{g}:=\Phi^{+}\cap g(\Phi^{-})$. This is the set of positive roots which are made negative by $g^{-1}$.
 The weak order of  $(G,\Phi)$ is defined to be  the set 
 $\mathcal{L}:=\{\, \Phi_{g}\mid g\in G\,\}$ of subsets of $\Phi^{+}$, partially ordered by inclusion.
 
For a Coxeter system $(W,S)$, the usual standard root system $\Phi$ of $(W,S)$ together with a choice $\Phi^{+}$ of positive roots give rise to a signed group-set $(W,\Phi)$.
The weak-order of this signed group set is exactly the usual weak order of $W$.

In general, for a signed group-set $(G,S)$, the weak order has   good properties if the  additional conditions listed below  are imposed on it. In \cite{Dyer}, $(G,S)$ is said to be rootoidal if the following additional conditions hold:

    \begin{enumerate}\item[(4)] If $g\in {G}$ with $\Phi_{g}=\emptyset$ then $g=1_{G}$.
    \item[(5)]  $\mathcal{L}$ is a complete meet semilattice i.e. each of its non-empty subsets  has a meet (greatest lower boind).
        \item[(6)]  Given  a non-empty family $(A_{i})_{i\in I}$ of  $\mathcal{L}$ and $B\in \mathcal{L}$ such that $A_{i}\cap B=\emptyset$ for all $i\in I$, if  the join  (least upper  bound) $A:=\bigvee_{i\in I} A_{i}$ exists in $\mathcal{L}$, then $A\cap B=\emptyset$. (This condition is called the JTP).
       \end{enumerate}
     
      A particularly interesting class of   rootoidal signed group-sets $(G,\Phi)$ are the 
  complete ones: $(G,\Phi)$ is said to be complete if $\mathcal{L}$ has a maximum element, or equivalently if the weak order of  $\mathcal{L}$ is a complete lattice.             A (complete) signed groupoid-set  gives rise to  an associated (complete)  rootoid, to which the extensive theory  
        developed in   \cite{Dyer} is       applicable. For example, any 
       homomorphism from a group (or groupoid)  $H$ to $G$ gives rise to an associated  
       ``functor rootoid'', which itself may be ``represented'' as a (complete) signed groupoid-set (the use of      groupoid-sets rather than group-sets here is necessary in general  even when $H$ and     $G$ are both groups).

          The main result of this paper is as follows. Let $V$ be a real Euclidean space and 
       $G=O(V)$ be the orthogonal  group of $V$. Let $\Phi$ be the unit sphere in $V$, with its natural action by $G\times\{\pm 1\}$.
       Fix a vector-space total ordering $\preceq$ of $V$ and let $\Phi^{+}:=\{\, v\in \Phi\mid v>0\,\}$ and $\Phi^{-}:=-\Phi^{+}$.
       \begin{theorem} The pair $(G,\Phi)$ defined above is a complete, rootoidal, signed group-set.\end{theorem}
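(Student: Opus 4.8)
The plan is to verify conditions (4)--(6) in turn for the pair $(G,\Phi)$, where $\Phi$ is the unit sphere, together with establishing completeness by exhibiting a maximum element of $\mathcal{L}$.The plan is to verify conditions (4)--(6) and completeness by passing to a purely convex-geometric model of $\mathcal{L}$. First I would record the correspondence between total vector-space orders and the group. The positive set of $\preceq$ determines a convex cone $P\subseteq V$ with $P\sqcup\{0\}\sqcup(-P)=V$; filtering $V$ by Archimedean classes exhibits $P$ as a lexicographic cone attached to a complete flag with oriented one-dimensional quotients, equivalently to an ordered orthonormal basis. Since $O(V)$ acts simply transitively on ordered orthonormal bases, it acts simply transitively on the set of all such total-order cones $Q$. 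Writing $\Phi_g=\{v\in\Phi^{+}: g^{-1}v\in\Phi^{-}\}$, one checks $\mathrm{cone}(\Phi_g)=P\cap(-gP)$, and since $-gP$ ranges over \emph{all} total-order cones $Q$ as $g$ ranges over $O(V)$, the map $g\mapsto\Phi_g$ identifies $\mathcal{L}$ with the family of cones $C=P\cap Q$, $Q$ an arbitrary total-order cone (intersected with $\Phi$).

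Condition (4) then follows quickly: $\Phi_g=\emptyset$ forces $P\cap(-gP)=\{0\}$, whence every nonzero $v\in P$ lies in $gP$, so $P\subseteq gP$; as both are total-order cones this gives $P=gP$, and simple transitivity yields $g=1$. For completeness I would exhibit the maximum directly: taking $g=-\mathrm{id}$ gives $\Phi_{-\mathrm{id}}=\Phi^{+}$, which dominates every $\Phi_g$, so $\mathcal{L}$ has a top element.

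The geometric heart of the argument is the identification of $\mathcal{L}$ with the \emph{biconvex} cones in $P$, where $C\subseteq P$ is biconvex if both $C$ and $P\setminus C$ are convex cones. The forward inclusion is immediate, since $P\cap Q$ and its complement $P\cap(-Q)$ are both convex; the converse---that every biconvex $C$ equals $P\cap Q$ for some total-order cone $Q$---I would prove by extending the proper convex cone generated by $C\cup(-(P\setminus C))$ to a maximal one via Zorn's lemma, the delicate point being to arrange that the extension meets $P$ in exactly $C$. Granting this, JTP (6) is easy: if $B=P\cap Q_B$ and each $A_i$ is disjoint from $B$, then each $A_i$ is contained in the complementary biconvex cone $P\cap(-Q_B)\in\mathcal{L}$; being an upper bound of the $A_i$, it contains their join $A$, so $A\cap B=\emptyset$.

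The main obstacle is condition (5), the existence of arbitrary meets. Given a family with $C_i=P\cap Q_i$, the intersection $Y=\bigcap_i C_i=P\cap\bigcap_i Q_i$ is a convex cone, but in general $P\setminus Y$ is not convex, so $Y$ itself need not be biconvex (this already occurs in $O(2)$, where $\mathcal{L}$ consists of the ``prefix'' and ``suffix'' arcs of a semicircle and the meet of an overlapping prefix and suffix is empty). The meet must therefore be the \emph{largest} biconvex cone contained in $Y$, and the task is to show it exists. I would first observe that a directed union of biconvex cones is again biconvex---convexity of the union follows from directedness, convexity of the complement from its being an intersection of convex cones---so it suffices to prove that the biconvex cones contained in $Y$ form a directed family; their union is then the required meet, and it lies in $\mathcal{L}$ by the biconvexity characterization. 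Establishing directedness, namely that any two biconvex cones inside $Y$ admit a common biconvex upper bound inside $Y$, is where the real work lies; I expect to carry it out by induction on $\dim V$, peeling off the top line of the flag attached to a suitable interpolating order, or by a direct separation argument. Once (5) holds, a complete meet-semilattice with a top element is automatically a complete lattice, giving the stated completeness and finishing the theorem.
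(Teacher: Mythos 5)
Your framework is essentially the paper's own: the identification of $\mathcal{L}$ with sets $P\cap Q\cap\Phi$, $Q$ ranging over maximal pointed convex cones (total-order cones), is the paper's characterization of $\mathcal{L}_n$ via the cones $D_X$; your biconvex characterization is exactly the construction $D=C_X\cup(-C_Y)$ that the paper verifies directly in its final lemma (no Zorn's lemma is needed---the cone generated by $C\cup(-(P\setminus C))$ is already maximal pointed, and even in your Zorn variant the ``delicate point'' is automatic, since $-(P\setminus C)\subseteq Q$ forces $(P\setminus C)\cap Q=\{0\}$ by pointedness of $Q$); your treatment of condition (4), the top element $\Phi_{-\mathrm{id}}=\Phi^{+}$, the JTP via complements, and the fact that directed unions of biconvex cones are biconvex (the paper's Section 6 lemma) are all correct. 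Your reduction of meets to directedness of the family of lower bounds is also valid: if binary joins exist, the join of two lower bounds of a family $(C_i)$ is again a lower bound (each $C_i$ is an upper bound of the pair, so the least upper bound sits inside each $C_i$), and the directed union then serves as the meet. The only structural difference from the paper is cosmetic: the paper builds joins directly (binary, then directed, then arbitrary via the Davey--Priestley exercise) and gets meets by the complementation duality, whereas you build meets directly and invoke ``complete meet-semilattice with top element $=$ complete lattice.''

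The genuine gap is that the one step you defer---directedness, which as you note amounts to the existence of a common biconvex upper bound for two biconvex cones, i.e.\ to binary joins in $\mathcal{L}$---is the technical heart of the entire paper, occupying all of its Section 5, and ``I expect to carry it out by induction on $\dim V$ \dots\ or by a direct separation argument'' is not a proof of it. The induction is genuinely delicate: it requires first computing closures, $\bar{X}=H_{u_X}\cap H_{-e_{n+1}}\cap S^n$ where $H_{u_X}$ is the supporting half-space of $D_X$ (which in turn rests on the facts that $\overline{A\cap S^n}=\bar{A}\cap S^n$ for convex cones and $\overline{H_u^{int}\cap H_v^{int}}=H_u\cap H_v$), and then a case analysis according to whether the boundary hyperplanes $H(u_X)$, $H(u_Y)$ cut $\Bbb{R}^n$ in distinct hyperplanes, in the same hyperplane, or degenerately (one of $T_X,T_Y$ preserving $\Bbb{R}^n$), with the case $\bar{X}\subseteq\bar{Y}$ handled by a separate induction inside the supporting hyperplane $H(u_Y)$ and the other cases handled by adjoining the open half-space $\{x\mid\langle x,e_{n+1}\rangle>0\}$ to a join taken in $\mathcal{L}_{n-1}$. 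In each case one must also prove the constructed element is \emph{least}, which uses a rigidity lemma (an upper bound $Z$ of $X$ and $Y$ with $H(u_X)\cap\Bbb{R}^n\neq H(u_Y)\cap\Bbb{R}^n$ is forced to satisfy $T_Z=T_Z'\bot(-1_{e_{n+1}})$). Without carrying out this analysis, your proposal establishes only the easier parts of the theorem, and its central claim remains unproved.
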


 To prove the theorem,  we shall use the fact that  a signed group-set  $(G,\Phi)$ with weak order $L$ is a complete rootoidal signed group-set if  the following conditions hold:
    \begin{enumerate}\item[(7)] If $g\in {G}$ with $\Phi_{g}=\emptyset$ then $g=1_{a}$.
    \item[(8)]  $L$ is a complete join  semilattice i.e. each of  subsets  has a join 
     \item[(9)] If $A\in \mathcal{L}$, then $\Phi^{+}\setminus A\in \mathcal{L}$.
       \end{enumerate}       
 Indeed, (8) and (9) imply that $L$ is a complete meet semi-lattice by duality.    To prove   that the JTP holds,    let $A_{i}$, $B$ be as in its statement. By (9), $A_{i}\subseteq \Phi^{+}\setminus B\in \mathcal{L}$ for all $i$, so by definition of join, $A=\bigvee A_{i}\subseteq \Phi^{+}\setminus B$ and $A\cap B=\emptyset$.

\noindent
\thanks {\bf Acknowledgement}  \ \ 
I would like to thank Matthew Dyer for some very helpful conversations.  

\section{Notation and Definitions}\label{intro}
If $X$ and $Y$ are sets, we use  $Y \subseteq X$ to denote that $Y$ is a subset  of $X$ or equal to $X$.  The notation $x \in X$ is used to denote that 
$x$ is  an element of $X$. 
If $X$ and $Y$ are sets with $Y \subseteq X$, we use  $X\backslash Y$ to denote the set $\{x \in X | x \not\in Y\}$. 
We use the definitions and notation of \cite{DnP} regarding posets given below. 

\begin{definition}
 A partially ordered set or poset  is a set $P$ equipped with a binary relation $\leq$ such that for all $x, y, $ and $z \in P$,
\begin{enumerate}
\item $x \leq x$,
\item $x \leq y$ and $y \leq x$ imply $x = y$,
\item $x \leq y$ and $y \leq z$ imply $x \leq z$. 
\end{enumerate}
\end{definition}

\begin{definition} A poset $P$ with binary operation $\leq$ is totally ordered if given any  pair of elements $\{x, y \} \in P$ we have either $x \leq y$ or $y \leq x$.  
\end{definition}
A subset of a poset $P$ also has the structure of a poset  which it inherits from $P$. A poset $P$ is a {\bf chain} if for all $x, y \in P$, 
either $x \leq y$ or $y \leq x$.  Two posets $P$ and $Q$ are said to be {\bf order isomorphic}  if there exists a map $\phi$ from $P$ onto $Q$ 
such that $x \leq y$ in $P$ if and only if  $\phi(x) \leq \phi(y)$ in $Q$. Such a map $\phi$ is called an order isomorphism from $P$ to $Q$. 
\begin{definition}
Given a partially  ordered set $P$ with order relation $\leq_{P}$, its dual, $P^{\delta}$, is the poset which is equal to $P$ as a set, with binary relation 
$\leq_{P^{\delta}}$ given by 
$x \leq_{P^{\delta}} y$  if and only if $y \leq_{P} x$. 
\end{definition}

Given a poset $P$ and a subset $S \subseteq P$, an element $x \in P$ is an {\bf upper bound}  for $S$  if $y \leq x$ for all $y \in S$.  An element 
$l \in P$ is a {\bf least upper bound}  for $S$ if $l$ is an upper bound for $S$ and if $x$ is another upper bound for $S$, then $l \leq x$. 
The least upper bound of 
the set $S$ is denoted by $\bigvee S$ when it exists or $\bigvee_P S$ if it is unclear which poset we are considering. In the case of two elements, we let $x \vee y$ denote 
the least upper  bound of the two element set $\{x, y\}$. 
{\bf Lower bounds } and 
{\bf greatest lower bounds}  are defined dually with $\bigwedge S$ denoting the greatest lower bound of the set $S$ and $x \wedge y$ denoting the 
greatest lower bound of the set $\{x, y\}$. 

\begin{definition}
Let $P$ be a non-empty poset. If $x\wedge y$ and $x \vee y$ exist in $P$ for all $x, y \in P$, we say $P$ is a {\bf Lattice}. If $\bigwedge S$ and $\bigvee S$ exist in $P$ 
for every subset  $S$ of $P$, then we say $P$ is a {\bf complete lattice}. 
\end{definition}

If $S$ is a non-empty subset of a poset $P$, $S$ is said to be {\bf directed} if for every pair of elements $x, y \in S$, there exists $z \in S$, such that $z$ is an
upper bound for the set $\{x, y\}$.

We let $\Bbb{R}^{n + 1}$ denote The Euclidean $(n + 1)$ space, with standard basis $\{e_1, e_2, \dots , e_{n + 1}\}$.   The standard  inner product or dot product on $\Bbb{R}^{n+1}$ is denoted by  $< , >$. The unit sphere in 
$\Bbb{R}^{n+1}$ is 
$$S^n = \{(x_1, x_2, \dots , x_{n+1}) \in \Bbb{R}^{n+1}  | x_1^2 + x_2^2 + \dots + x_{n+1}^2 = 1\}$$
 and $\Phi_n^+$
 is used to denote the set 
 $$\Phi_n^+ = \{(x_1, x_2, \dots , x_{n + 1}) \in S^n | x_m  = <x, e_m>  > 0 \ \ \mbox{where} \ \ m = \mbox{max}\{i | x_i \not= 0\} \}.$$
For its negative   $ -\Phi_n^+ $, we use 
$$\Phi_n^- =  \{(x_1, x_2, \dots , x_{n + 1}) \in S^n | x_m  = <x, e_m> < 0 \ \ \mbox{where} \ \ m = \mbox{max}\{i | x_i \not= 0\} \}.$$
Note that $S^n = \Phi_n^+ \cup \Phi_n^-$ and $\Phi_n^+ \cap \Phi_n^-$ is empty. 

For  a Euclidean  space $V$, we let $O(V)$ denote the orthogonal group of $V$. Sometimes we use 
 $O_{k}$  to denote the orthogonal transformations of  $\Bbb{R}^{k}$.
For any $T \in O_{n + 1}$, we have  $<T(x), T(y)> = <x, y>$ for all  $x, y \in \Bbb{R}^{n + 1}$.  
We write $x\bot y$ to denote that  $<x, y> = 0$ for $x, y \in \Bbb{R}^{n + 1}$. 
If $V_1$ and $V_2$ are subspaces of $\Bbb{R}^{n + 1}$ with $V_1 + V_2 = V$ and $<v_1, v_2> = 0$ for all $v_1 \in V_1$ and $v_2 \in V_2$, then we 
say that $V_2$ is the orthogonal complement of $V_1$ in $V$ and we write the sum as $V = V_1 \bot V_2$. If $T_1 \in O(V_1)$ and $T_2 \in O(V_2)$ and $V = V_1 \bot V_2$, the linear  transformation $T = T_1 \bot T_2$ is the unique linear transformation in 
$O(V)$ with the property that  $T(v_1) = T_1(v_1)$ and $T(v_2) = T_2(v_2)$ for all $v_1 \in V_1$ and $v_2 \in V_2$. We let $I_V$ denote the identity transformation on $V$ and we let $1_{e_i}$ denote the identity transformation on the subspace $\Bbb{R}e_i$ of $\Bbb{R}^{n + 1}$. 

We  have a natural imbedding of $\Bbb{R}^{k}, k < n$  in $R^{n + 1}$ where we identify 
$\Bbb{R}^{k}$ with the set $\{(x_1, x_2, \dots , x_{n + 1} )| x_m = 0 \ \mbox{if} \ m > k \}$. 
This induces an imbedding of $S^{k - 1}, \Phi_{k - 1}^+$  and $\Phi_{k - 1}^-$ in $R^{n + 1}$. 
For $x = (x_1, x_2, \dots , x_{n + 1})  \in \Bbb{R}^{n + 1}$, we let $||x|| = \sqrt{x_1^2 + x_2^2 + \dots + x_{n + 1}^2}$. 
The topological closure of a set $X \subseteq \Bbb{R}^{n + 1}$ with respect to the metric defined by this norm on $\Bbb{R}^{n + 1}$ is denoted by $\bar{X}$.

\section{Convex cones }
We will use the terminology   of \cite{Fenchel} with regard to convex cones. This may differ from other texts. In this section we include a series of Lemmas, proving some basic facts for 
which we cannot find direct references in the literature. 

\begin{definition} \label{cones} Let $V$ be a Euclidean space with positive definite inner product $<- , ->$. A subset $C \subseteq V$ is called a {\bf cone} if $C$ satisfies  conditions (1) and (2) below:
\begin{enumerate}
\item $0 \in C$,
\item If $v \in C$, then $\Bbb{R}_{\geq 0}v \subseteq  C$.

\noindent
\leftline{C is called a {\bf convex cone} if it also satisfies the property:}

\item If $u, v \in C$, then $u + v \in C$.

\noindent 
A convex cone $C$  is a  {\bf pointed convex cone}
 if it is a convex cone and satisfies the property:
\item  $C \cap (-C) = \{0\}$.
\end{enumerate}
\end{definition}

Note that for any cone $C$ contained in $R^k$, $k > 0$, if $x \not= 0  \in C$, the {\bf  ray}  $(x) = \{rx | r \in \Bbb{R}_{\geq 0}\}$ is contained in $C$. This ray 
intersects the unit sphere $S^{k - 1}$ at exactly one point $\frac{x}{||x||}$. Thus we have a one to one correspondence between the rays in $C$ and the points on the unit sphere $S^{k - 1}$ which are  in $C$. 

\begin{definition} \label{hatconedef} Let $X \subseteq S^n$, we use  $\widehat{X}$ to denote the following  cone in $\Bbb{R}^{n + 1}$:
$$\widehat{X} = \{rx | x \in X \ \mbox{and} \ r \in \Bbb{R}_{\geq 0} \} \ \mbox{if} \ X \not= \emptyset$$
and $\widehat{\emptyset} = \{0\}$. 

\end{definition}

\begin{lemma} \label{hatcone} Let $X$ and $Y$ be subsets of $S^n$.
\begin{enumerate}
\item $X = \widehat{X} \cap S^n$ and   if 
$C$ is any cone  in $\Bbb{R}^{n + 1}$ with $X = C\cap S^n$, then $\widehat{X} = C$,
\item  $X\cap Y = \emptyset$, if and only if  $\widehat{X} \cap \widehat{Y} = \{0\}$.
\end{enumerate}
\end{lemma}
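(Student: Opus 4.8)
The plan is to exploit two elementary facts about the construction $\widehat{(-)}$: first, that $\widehat{X}$ always satisfies conditions (1) and (2) of Definition \ref{cones}, so it is a cone; and second, that a nonzero vector $z$ lies on a ray meeting $S^n$ in the single point $z/||z||$. Both parts of the lemma follow once these are in hand, together with the repeated use of the fact that $0 \notin S^n$.

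For part (1), I would first check that $\widehat{X}$ is a cone: $0 \in \widehat{X}$ in both the empty and nonempty cases, and if $v = rx \in \widehat{X}$ then $\Bbb{R}_{\geq 0} v \subseteq \widehat{X}$ since $(sr)x \in \widehat{X}$ for all $s \geq 0$. To prove $X = \widehat{X} \cap S^n$, the inclusion $X \subseteq \widehat{X} \cap S^n$ is immediate from $x = 1 \cdot x$ and $X \subseteq S^n$. For the reverse inclusion, if $z = rx \in \widehat{X} \cap S^n$ with $x \in X \subseteq S^n$ and $r \geq 0$, then $1 = ||z|| = r\,||x|| = r$, whence $z = x \in X$; the empty case is covered by $\{0\} \cap S^n = \emptyset$. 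For the uniqueness statement, given a cone $C$ with $X = C \cap S^n$, I would argue by double inclusion: $\widehat{X} \subseteq C$ because each $rx$ with $x \in X \subseteq C$ lies in $\Bbb{R}_{\geq 0} x \subseteq C$ while $0 \in C$; conversely, for $z \in C$ nonzero, $z/||z|| = (1/||z||)z \in C \cap S^n = X$ (using that $C$ is a cone), so $z = ||z|| \cdot (z/||z||) \in \widehat{X}$, and $0 \in \widehat{X}$ as well. Hence $C = \widehat{X}$.

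For part (2), I would prove the two directions separately, using part (1) for the harder one. If $X \cap Y \neq \emptyset$, pick $z$ in the intersection; then $z \in \widehat{X} \cap \widehat{Y}$ and $z \neq 0$ (as $z \in S^n$), so $\widehat{X} \cap \widehat{Y} \neq \{0\}$, which is the contrapositive of the forward implication. Conversely, suppose $X \cap Y = \emptyset$. Since both $\widehat{X}$ and $\widehat{Y}$ contain $0$, it suffices to rule out a nonzero common point $z$; for such a $z$, normalizing gives $z/||z|| \in \widehat{X} \cap S^n = X$ and likewise $z/||z|| \in Y$ by part (1), contradicting $X \cap Y = \emptyset$. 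Thus $\widehat{X} \cap \widehat{Y} = \{0\}$.

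I do not expect a genuine obstacle here; the argument is entirely elementary. The only points requiring care are the degenerate case $X = \emptyset$ (where $\widehat{\emptyset} = \{0\}$ must be handled by hand, and $0 \notin S^n$ is what makes $\{0\} \cap S^n = \emptyset$) and the bookkeeping in the normalization step, where one must note that $r \geq 0$ forces $r = ||z||$ rather than $r = \pm||z||$.
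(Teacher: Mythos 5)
Your proof is correct and follows essentially the same route as the paper's: both rest on the observation that each ray through a point of $X$ meets $S^n$ exactly once, i.e.\ on the normalization $z \mapsto z/\Vert z\Vert$. The only cosmetic difference is in part (2), where the paper factors the argument through the identity $\widehat{X} \cap \widehat{Y} = \widehat{X \cap Y}$ (together with the fact that a cone meets $S^n$ emptily iff it equals $\{0\}$), while you prove the two implications directly; your version also spells out the degenerate case $X = \emptyset$, which the paper leaves implicit.
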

\begin{proof} Assertion (1) is trivial if $X = \emptyset$, so let us assume for now that  $X \not= \emptyset$. 
We have $\widehat{X}$ is a union of rays,  $\widehat{X} = \bigcup_{x \in X, x \not= 0} (x)$. Since each ray $(x)$ where 
$x \in X$ 
cuts the unit sphere exactly once  at $x$, we have $X = \widehat{X} \cap S^n$. If C is a cone with $C \cap S^n = X$, then 
$\widehat{X} \subseteq C$, since the entire  ray $(x)$ is in $C$ if $x \in X$. On the other hand, if $c \in C$, the ray $(c)$ cuts 
the sphere $S^n$ at $\frac{c}{||c||} \in X$. Therefore $(\frac{c}{||c||}) = (c) \in \widehat{X}$ and since $C$ is the union of such rays, we have $C \subseteq \widehat{X}$. This proves (1).

We can now relax the assumption that $X \not= \emptyset$. 
It is easily seen that  $\widehat{X} \cap \widehat{Y} = \widehat{X \cap Y}$. Assertion (2) follows from the observation that 
if $C$ is a cone in  $\Bbb{R}^{n + 1}$, then $C \cap S^n = \emptyset$ if and only if $C = \{0\}$. 
\end{proof}

\begin{lemma} \label{cone} Let $A$ be a convex cone in $\Bbb{R}^{n + 1}$, then $\overline{A \cap S^{n} } = \bar{A} \cap S^n$.
\end{lemma}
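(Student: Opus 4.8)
The plan is to establish the two inclusions $\overline{A \cap S^n} \subseteq \bar{A} \cap S^n$ and $\bar{A} \cap S^n \subseteq \overline{A \cap S^n}$ separately. The first is routine and uses only general topology: since $A \cap S^n \subseteq A \subseteq \bar{A}$ and the closure operator is monotone, we get $\overline{A \cap S^n} \subseteq \bar{A}$; likewise $A \cap S^n \subseteq S^n$, and $S^n$ is closed, so $\overline{A \cap S^n} \subseteq S^n$. Intersecting these two containments yields the first inclusion. Note that this direction does not use that $A$ is a cone at all.

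For the reverse inclusion, I would argue by producing, for each $x \in \bar{A} \cap S^n$, an explicit sequence in $A \cap S^n$ converging to $x$. Fix such an $x$; since $x \in \bar{A}$ there is a sequence $(a_k)$ in $A$ with $a_k \to x$. The crucial point is that $x$ lies on the unit sphere, so $\|x\| = 1$ and in particular $x \neq 0$; since $\|a_k\| \to \|x\| = 1$, we have $a_k \neq 0$ for all sufficiently large $k$. For such $k$ the normalized vector $b_k := a_k / \|a_k\|$ is a positive scalar multiple of $a_k$, hence lies in $A$ by the cone property (Definition~\ref{cones}(2)), and it has unit length, so $b_k \in A \cap S^n$. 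Because the normalization map $v \mapsto v/\|v\|$ is continuous away from the origin, $b_k \to x/\|x\| = x$, which shows $x \in \overline{A \cap S^n}$.

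There is essentially no hard step here: the only thing to check carefully is that the limit point, being on the sphere, is forced to be nonzero, which is precisely what lets us normalize and then invoke continuity. It is worth emphasizing that the argument uses only that $A$ is a cone, not that it is convex; convexity of $A$ is irrelevant to this particular statement, though it presumably matters for how the lemma is applied elsewhere. This mirrors the normalization correspondence between rays in a cone and points of the sphere that is already exploited in the proof of Lemma~\ref{hatcone}.
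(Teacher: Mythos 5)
Your proof is correct and follows essentially the same route as the paper: both establish the easy inclusion by closedness, then take a sequence in $A$ converging to $x \in \bar{A} \cap S^n$ and normalize it into $A \cap S^n$, the only cosmetic difference being that you invoke continuity of $v \mapsto v/\|v\|$ where the paper runs an explicit triangle-inequality estimate. Your side remark is also apt: the normalization step uses only the cone property (Definition~\ref{cones}(2)), not convexity, so the paper's phrase ``since $A$ is convex'' at that point is really an appeal to the cone condition.
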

\begin{proof}
Let   $A \subset \Bbb{R}^{n + 1}$ be a convex cone.  Since 
$ \bar{A} \cap S^n$ is closed and contains $A \cap S^{n}$, we have $\overline{A \cap S^{n} } \subseteq  \bar{A} \cap S^n$. On the other hand, if $x \in \bar{A} \cap S^n$, then $||x|| = 1$ and we have a sequence of points $\{x_i\}_{i = 1}^{\infty}$ in $A$ such that
$\lim_{i \to \infty}x_i = x$. Since $A$ is convex we have the sequence of points $\{\frac{x_i}{||x_i||}\}_{i = 1}^{\infty} \in  \bar{A} \cap S^n$. 
Given any $\epsilon > 0$, there exists $m$ such that  $| ||x_i|| - 1 | = | ||x_i|| - ||x|| | \leq   ||x_i - x|| < \epsilon $ for all $i \geq m$.
Since 
$$||x - \frac{x_i}{||x_i||}|| \leq ||x - x_i|| + ||x_i -  \frac{x_i}{||x_i||}|| = ||x - x_i|| + \frac{||x_i||(| \ ||x_i||  -  1|)}{||x_i||} \leq 2\epsilon$$
for $i \geq m$, we have that the sequence $\{\frac{x_i}{||x_i||}\}_{i = 1}^{\infty}$ converges to $x$ and $x \in \overline{A \cap S^{n} }$. 
\end{proof}

\begin{definition} Let $V$ be a Euclidean space with positive definite inner product $<- , ->$. For any fixed $u  \in V$, we let $H_u$ denote {\bf the closed half space}   $H_u = \{v \in V | <v, u>  \leq 0 \}$. 
We let $H(u)$ denote the {\bf hyperplane} $\{ v \in V | 
<v, u> = 0 \}$ and we let $H_u^{int}$ denote the {\bf open half space }
$\{ v \in V | 
<v, u> < 0 \}$. \\
We will use the notation $H^V_u, H^V(u)$, and $H^{Vint}_u$ when we want to emphasize  the 
underlying vector space.\\
\end{definition}

 \begin{lemma} \label{hyper}Let $V$ be a Euclidean space with positive definite inner product \\$<- , ->$. If 
 $u, v \in V$ with $H(u) \subseteq H_v$,
  then   $H(u) = H(v)$. Consequently if  $u, v \in V$ with
  $H_u \subseteq H_v$, then $H_u = H_v$.
  Also  $v_1 = \frac{-v}{||v||}  \in H_v$ is the unique vector of norm one in $H_v$ with the property that $H_{v} = H_{-v_1}$. 
\end{lemma}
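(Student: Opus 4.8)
The plan is to treat the three assertions in turn, reducing everything to a single observation: a linear subspace that lies inside a half space must in fact lie inside the bounding hyperplane. Throughout I take $u,v\neq 0$, which is the essential case (the final assertion requires $v\neq 0$ in any event, and once $v\neq 0$ the hypothesis $H(u)\subseteq H_v$ forces $u\neq 0$, since $H(0)=V\not\subseteq H_v$ for $v\neq 0$).

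For the first statement, I would start from $H(u)\subseteq H_v$, which says $\langle w,v\rangle\le 0$ for every $w\in H(u)=u^{\perp}$. Because $H(u)$ is a subspace, $w\in H(u)$ implies $-w\in H(u)$, so $\langle w,v\rangle\le 0$ and $\langle -w,v\rangle\le 0$ together give $\langle w,v\rangle=0$; hence $H(u)\subseteq H(v)$. To promote this to equality I would show $v\in\mathbb{R}u$ by orthogonal projection: write $v=\alpha u+w$ with $\alpha=\langle v,u\rangle/\|u\|^{2}$ and $w=v-\alpha u\in u^{\perp}=H(u)\subseteq H(v)=v^{\perp}$, so that $0=\langle w,v\rangle=\langle w,\alpha u+w\rangle=\|w\|^{2}$. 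Thus $w=0$ and $v=\alpha u$ with $\alpha\neq 0$, giving $H(v)=v^{\perp}=u^{\perp}=H(u)$. I prefer this projection argument to a dimension count so that the proof is valid whether or not $V$ is finite dimensional.

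For the ``consequently'' clause, I would note that $H(u)\subseteq H_u\subseteq H_v$, since the bounding hyperplane is contained in its own closed half space, and then apply the first statement to conclude $H(u)=H(v)$ and hence $v=\alpha u$ with $\alpha\neq 0$. To pin down the sign I would test the inclusion $H_u\subseteq H_v$ on the point $-u\in H_u$: this yields $\langle -u,v\rangle=-\alpha\|u\|^{2}\le 0$, so $\alpha>0$. Since $\langle w,v\rangle=\alpha\langle w,u\rangle$ then has the same sign as $\langle w,u\rangle$ for every $w$, the definitions give $H_u=H_v$ directly.

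Finally, for $v_1=-v/\|v\|$ the computations are immediate: $\langle v_1,v\rangle=-\|v\|<0$ shows $v_1\in H_v^{int}\subseteq H_v$, and $H_{-v_1}=\{w\mid\langle w,v/\|v\|\rangle\le 0\}=\{w\mid\langle w,v\rangle\le 0\}=H_v$. For uniqueness, suppose $w$ has norm one and $H_{-w}=H_v$. Comparing bounding hyperplanes gives $H(w)=H(-w)=H(v)=H(v_1)$, so the projection argument of the first statement shows $w$ and $v_1$ are parallel; being unit vectors, $w=\pm v_1$. The choice $w=-v_1$ is excluded because it would give $H_{-w}=H_{v_1}=\{x\mid\langle x,v\rangle\ge 0\}\neq H_v$, leaving $w=v_1$. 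The only step that requires any idea is the upgrade from half-space containment to hyperplane containment via the symmetry $w\mapsto -w$ of the subspace $H(u)$; everything after that is routine orthogonal projection and sign checking.
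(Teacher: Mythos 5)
Your proof is correct, and its engine is identical to the paper's: $H(u)$ is stable under $x\mapsto -x$, so lying inside a closed half space forces it into the bounding hyperplane, and the sign of the scalar relating $u$ and $v$ is then fixed by testing $-u$ against the inclusion. The one genuine divergence is how the equality $H(u)=H(v)$ is reached from $H(u)\subseteq H(v)$: the paper compares dimensions and then uses the orthogonal decomposition of $V$ as the span of $u$ plus $H(u)$ (equal to the span of $v$ plus $H(v)$) to get $u=cv$, whereas you run a single projection computation ($v=\alpha u+w$ with $w\in H(u)\subseteq H(v)$, so $\|w\|^{2}=\langle w,v\rangle=0$) that yields the hyperplane equality and the proportionality $v=\alpha u$ in one stroke, with no dimension count and hence no appeal to finite-dimensionality. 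You are also more complete than the paper on two minor points: you verify the existence half of the last assertion (that $H_{-v_1}=H_v$ actually holds for $v_1=-v/\|v\|$), which the paper asserts only implicitly, and you make explicit the nondegeneracy assumption $u,v\neq 0$ that both proofs silently need (for $v=0\neq u$ the hypothesis $H(u)\subseteq H_v=V$ holds while the conclusion $H(u)=H(v)=V$ fails). Your uniqueness step — equality of bounding hyperplanes, then $w=\pm v_1$, then exclusion of the wrong sign — is a slightly longer route than the paper's, which applies the second assertion directly to $H_{-v_1}\subseteq H_v$ to get $-v_1=cv$ with $c>0$, but it is sound.
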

\begin{proof} Suppose $u, v \in V$ with  $H(u) \subseteq H_v = \{x \in V | <x, v> \leq 0 \}$. If $x \in H(u)$ with the property that 
$<x, v> \not= 0$, then $\{x,-x\} \in H(u)$ and either  $<x, v>  > 0$ or $<-x, v> > 0$ giving a contradiction. Hence  $H(u) \subseteq
H(v)$ and we have equality by comparing dimensions. 

If $H_u \subseteq H_v$, then $H(u) \subseteq H_v$ and therefore $H(u) = H(v)$. Now  $V  = <v> \bot H(v) = <u> \bot H(u) = <u> \bot H(v)$ and $u = cv$. Since 
$H_u \subseteq H_v$,  $-u \in H_v$, hence
$<-u, v> = <-cv, v> < 0$ and we have $c > 0$. Therefore $H_u = H_v$. 

Let $v_1 \in H_v$ such that $H_{-v_1}  = H_{v}$, then as above, 
   $-v_1 = cv$ for some $c > 0$. If we assume $||v_1|| = 1$, then we must have $v_1 = \frac{-v}{||v||} $. 
\end{proof}

\begin{lemma} \label{halfspaceintersect}
Let $V$ be a Euclidean space with positive definite inner product \\$<- , ->$ and let $u, v \in V$. Then 
$H_u^{int} \cap H_v^{int}$ is non-empty if $ u \not= -cv, \ c > 0$. 
\end{lemma}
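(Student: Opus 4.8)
The plan is to exhibit a single explicit vector lying in both open half spaces. Throughout I read the hypotheses as including $u \neq 0$ and $v \neq 0$: if, say, $u = 0$ then $H_u^{int} = \{x \mid <x,u> < 0\}$ is empty and the asserted conclusion cannot hold, so the nonzero condition is implicit (in the intended application $u,v$ lie on the unit sphere anyway).

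First I would reduce to the case of unit vectors. Scaling $u$ by a positive real constant does not change the set $\{x \mid <x,u> < 0\} = H_u^{int}$, and likewise for $v$, so I may replace $u,v$ by $u/||u||$ and $v/||v||$ and assume $||u|| = ||v|| = 1$. Under this normalization the hypothesis ``$u \neq -cv$ for all $c > 0$'' becomes simply $u \neq -v$, since two unit vectors are negative scalar multiples of one another precisely when they are antipodal.

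Next comes the construction. I would test the vector $x := -(u + v)$. A direct computation gives $<x,u> = -||u||^2 - <u,v> = -(1 + <u,v>)$ and, by symmetry, $<x,v> = -(1 + <u,v>)$, so the two inner products coincide. It therefore suffices to show $<u,v> > -1$, and this is exactly the strict form of the Cauchy--Schwarz inequality: for unit vectors $<u,v> \geq -||u||\,||v|| = -1$, with equality if and only if $u = -v$. Since the reduced hypothesis rules out $u = -v$, I obtain $<u,v> > -1$, hence $<x,u> = <x,v> < 0$, so $x \in H_u^{int} \cap H_v^{int}$ and the intersection is non-empty.

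The argument is short, and the step I would be most careful about is the interplay between the normalization and the hypothesis: one must verify both that passing to $u/||u||$ and $v/||v||$ preserves the open half spaces and that it converts ``$u \neq -cv$ for all $c>0$'' into the single condition $u \neq -v$, which is precisely the equality case of Cauchy--Schwarz that is being excluded. If one prefers to avoid normalizing, the same vector in unnormalized form, $x = -\bigl(u/||u|| + v/||v||\bigr)$, works directly: there $<x,u> = -||u|| - <u,v>/||v||$ and $<x,v> = -<u,v>/||u|| - ||v||$, each of which is negative if and only if $<u,v> > -||u||\,||v||$, i.e. if and only if $u$ and $v$ are not antiparallel. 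No finiteness of $\dim V$ is needed, since both Cauchy--Schwarz and the explicit test vector are available in any inner product space.
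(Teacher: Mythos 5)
Your proof is correct, and it takes a genuinely different route from the paper's. The paper argues indirectly: if $H_u^{int} \cap H_v^{int} = \emptyset$, then every $x \in H_u^{int}$ satisfies $<x,v> \geq 0$, so $H_u^{int} \subseteq H_{-v}$; taking closures gives $H_u \subseteq H_{-v}$, and Lemma \ref{hyper} then forces $H_u = H_{-v}$, hence $u = -cv$ with $c > 0$, contradicting the hypothesis. You instead produce an explicit witness, $x = -\bigl(u/||u|| + v/||v||\bigr)$, and verify membership in both open half spaces via the strict case of Cauchy--Schwarz. What the paper's route buys is economy within its own framework: it reuses the half-space toolkit (Lemma \ref{hyper} and the closure identity $\overline{H_u^{int}} = H_u$) that is needed anyway for Lemma \ref{closure} and later arguments. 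What your route buys is that it is constructive and self-contained: it avoids the closure step and Lemma \ref{hyper} (whose proof compares dimensions and so uses finite-dimensionality), and as you note it works verbatim in any inner product space. Your remark on the degenerate case is also apt, and slightly sharper than the paper: if $u = 0$ and $v \neq 0$, the hypothesis ``$u \neq -cv$, $c > 0$'' holds while $H_u^{int} = \emptyset$, so the lemma as literally stated requires $u, v \neq 0$; the paper's proof tacitly assumes this too, since $\overline{H_u^{int}} = H_u$ fails for $u = 0$, and in all applications $u$ and $v$ are unit vectors.
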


\begin{proof} Let $u, v \in V$. Clearly $H_u^{int} \cap H_v^{int} = \emptyset$ if $u = -cv$ for some $c > 0$ in $\Bbb{R}$. Lets assume that  $u \not= -cv, c > 0$.  Suppose  that $H_u^{int} \cap H_v^{int} = \emptyset$, then if $x \in H_u^{int}$ we must have $x \not\in H_v^{int}$
and $<x, v> \geq 0$ and $H_u^{int}  \subseteq H_{-v}$. Taking closures, we get $H_u \subseteq H_{-v}$ and by Lemma  \ref{hyper} $H_u = H_{-v}$ and $u = -cv$ for some $c > 0$ giving us a contradiction. 
\end{proof}

\begin{lemma} \label{closure}Let $V$ be a Euclidean space with positive definite inner product \\$<- , ->$ and let $u, v \in V$ for which $ u \not= -cv, \ c > 0$. Then 
 $$\overline{H_u^{int} \cap H_v^{int}} = H_u \cap H_v.$$
\end{lemma}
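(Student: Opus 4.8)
The plan is to prove the two inclusions separately. The inclusion $\overline{H_u^{int}\cap H_v^{int}}\subseteq H_u\cap H_v$ is the routine one: since $H_u^{int}\subseteq H_u$ and $H_v^{int}\subseteq H_v$, the open intersection is contained in $H_u\cap H_v$, and the latter set is closed, being the intersection of the two closed half spaces $H_u$ and $H_v$. A closed set containing $H_u^{int}\cap H_v^{int}$ must contain its closure, which gives this inclusion immediately.

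The substantive inclusion is $H_u\cap H_v\subseteq \overline{H_u^{int}\cap H_v^{int}}$, and this is where the hypothesis $u\neq -cv$, $c>0$, is used. First I would invoke Lemma \ref{halfspaceintersect} to produce a point $y\in H_u^{int}\cap H_v^{int}$, so that $\langle y,u\rangle<0$ and $\langle y,v\rangle<0$; the hypothesis is exactly what guarantees this set is non-empty. Now given any $x\in H_u\cap H_v$, so that $\langle x,u\rangle\le 0$ and $\langle x,v\rangle\le 0$, I would slide $x$ toward the interior point $y$ along the segment $x_t:=(1-t)x+ty$ for $t\in(0,1]$. By bilinearity of the inner product, $\langle x_t,u\rangle=(1-t)\langle x,u\rangle+t\langle y,u\rangle$, where the first term is $\le 0$ because $1-t\ge 0$ and $\langle x,u\rangle\le0$, while the second is $<0$ because $t>0$ and $\langle y,u\rangle<0$. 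Hence $\langle x_t,u\rangle<0$, and the identical computation gives $\langle x_t,v\rangle<0$, so $x_t\in H_u^{int}\cap H_v^{int}$ for every $t\in(0,1]$. Letting $t\to 0^+$ we have $x_t\to x$, whence $x\in\overline{H_u^{int}\cap H_v^{int}}$, establishing the second inclusion and completing the proof.

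I expect the only delicate point to be the non-emptiness input rather than the geometry: the whole argument rests on having a single strictly interior point $y$, which is precisely what Lemma \ref{halfspaceintersect} supplies under the assumption $u\neq -cv$, $c>0$. Once $y$ is fixed, the convexity/segment step is elementary, and no estimate beyond the sign bookkeeping above is needed.
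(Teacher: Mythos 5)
Your proof is correct, and it takes a genuinely different route from the paper's. The paper first disposes of the trivial case $u = cv$, $c>0$, then assumes $u$ and $v$ are linearly independent and runs a case analysis on where $x \in H_u \cap H_v$ sits: points already in the open intersection, points on exactly one of the hyperplanes $H(u)$, $H(v)$ (handled by approximating along one half-space and using openness of the other), and finally points in the codimension-two subspace $H(u)\cap H(v)$, which require a further projection argument inside the hyperplane $H(v)$ using the projected vector $\pi(u)$. You instead use Lemma \ref{halfspaceintersect} to produce a single strictly interior point $y$ and then apply the classical line-segment principle for convex sets: for $x \in H_u\cap H_v$ the segment $(1-t)x + ty$, $t\in(0,1]$, lies entirely in $H_u^{int}\cap H_v^{int}$, so $x$ is a limit of interior points. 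This is shorter, needs no case split (the case $u=cv$, $c>0$ is absorbed automatically, since Lemma \ref{halfspaceintersect} still applies there), avoids the dimension count on $H(u)\cap H(v)$ and hence any appeal to finite-dimensionality, and generalizes immediately to the statement that a convex set with non-empty interior has the same closure as its interior. Note that both your argument and the paper's implicitly assume $u\neq 0$ and $v\neq 0$ (otherwise $H_u^{int}$ or $H_v^{int}$ is empty and the statement fails); this is a standing convention of the paper rather than a defect of your proof.
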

\begin{proof} Let $u$ and $v$ be as in the statement of  the lemma and let $C = H_u^{int} \cap H_v^{int}$.
Clearly the lemma is true if $u = cv$ for some $c > 0$. Therefore we will assume that $u \not= cv$ for any $c \in \Bbb{R}$ for the rest of the proof. In this case $H(u) \not= H(v)$. 

 Since $\overline{H_u^{int}} = H_u$ and $\overline{H_v^{int}} = H_v$, we have $C
\subseteq H_{u} \cap H_{v}$ and since $H_{u} \cap H_{v}$ is a closed set, we have $\bar{C} \subseteq H_{u} \cap H_{v}$. 
  To verify the opposite inclusion, consider 
 $x \in H_{u} \cap H_v$. If $x \in C$, then $x \in  \bar{C}$. Therefore we need only consider points $x$ where $<x, u> = 0$ or $<x, v> = 0$ or both. If 
 $<x, u> = 0$ and $<x, v> >  0$, we have a sequence $\{x_i\}_{i = 1}^{\infty}$ in $H_{u}^{int}$ with 
 $\lim_{i \to \infty} x_i = x$. Now since $x \in H_{v}^{int}$ which is an open set, there exists  an $m$ for which  $x_i \in H_{v}^{int}$ for all $i \geq m$. Therefore $x_i \in C$ for all $i \geq m$ and $x \in \bar{C}$. A similar argument shows that 
 if $<x, v>  =   0$ and $<x, u>  < 0$, we have $x \in \bar{C}$. 
 
 Now if $<x, v>  =   0$ and $<x, u>  =  0$, then $x \in [H(u) \cap H(v)]$ which is a subspace of $H(v)$ of dimension $n - 2$ since 
 $H(u) \not= H(v)$. 
 Let $H =  [H(u) \cap H(v)]$. It remains to show that $H \subseteq \bar{C}$. 
 We consider  $H$ as  a hyperplane in $H(v)$. It is not difficult to see that if $y \in H(v)$, $<y, u> = < y, \pi(u)>$ where $\pi(u)$ is the orthogonal projection of  $u$ onto the hyperplane $H(v)$. Therefore 
 $$H = \{y \in H(v) | <y, \pi(u)> = 0 \} = 
 H^{H(v)}(\pi(u))$$
  as a hyperplane in $H(v)$.
  Now looking at the corresponding open half space in  $H(v)$, we have $H_{\pi(u)}^{H(v){int}}  = \{x \in H(v)| <x,\pi(u)> < 0\}$. By our reasoning above we know this is a subset of $\bar{C}$. Taking the closure within $H(v)$, we have 
  $H^{H(v)}_{\pi(u)} = \overline{H_{\pi(u)}^{H(v)int} }\subseteq \bar{C}$  and $H = H^{H(v)}(\pi(u)) 
 \subseteq \bar{C}$. This completes the proof.

\end{proof}

\begin{definition} A  pointed convex cone $D$ in a Euclidean space $V$  is called {\bf maximal} if $D \cap (-D) = \{0\}$ and $D \cup (-D) = V$. 
\end{definition}

Note that a maximal pointed convex cone  in a Euclidean space $V$   defines a total ordering on $V$ given by $y \leq x$ if and only if $y - x \in D$.  Also $D = \{v \in V | v \leq 0\}$. 

\begin{definition}
A closed half space $H_u$  is called a {\bf support  for a  cone} $C$ if $C \subseteq H_u$.
\end{definition}

\begin{theorem} \label{unique} Let $D$ be a maximal pointed convex cone in a Euclidean space $V$  with  positive definite inner product $<-, - >$. 
Then $D$ has a unique support  $H_v$,  in $V$ and $H_v = \bar{D}$. Furthermore  $H_v^{int} = \{x \in V | <x, v> < 0 \} \subset D$. 
\end{theorem}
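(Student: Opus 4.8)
The plan is to pass to the closed convex cone $\bar{D}$, locate a support by a supporting‑hyperplane argument at the origin, and then recover all three assertions by comparing $\bar D$ with the resulting half space. The first order of business is to check that $D$ has non‑empty interior. Since $V=D\cup(-D)$ and a convex set with empty interior lies in an affine hyperplane, $V$ cannot be the union of two such sets; hence at least one of $D,-D$ has non‑empty interior, and as they are reflections of one another, both do. I would next record the key separation input, namely $\operatorname{int}(-D)\cap \bar D=\emptyset$. Indeed, if $q$ lay in both, choose $\epsilon>0$ with the ball $B(q,\epsilon)\subseteq -D$; since $q\in\bar D$ there is a point of $D$ inside $B(q,\epsilon)\subseteq -D$, hence a point of $D\cap(-D)=\{0\}$, forcing $0\in B(q,\epsilon)$ for every such $\epsilon$ and so $q=0$; but $0\notin\operatorname{int}(-D)$, because a pointed cone contains no ball about $0$. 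In particular $\bar D$ omits the non‑empty open set $\operatorname{int}(-D)$, so $\bar D\neq V$, and $0\in\partial\bar D$ (a ball about $0$ lying in the cone $\bar D$ would scale up to give $\bar D=V$).

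Existence of a support then follows from the supporting hyperplane theorem applied to the closed convex set $\bar D$ at its boundary point $0$: there is $v\neq 0$ with $\langle x,v\rangle\le\langle 0,v\rangle=0$ for all $x\in\bar D$, i.e. $\bar D\subseteq H_v$ and a fortiori $D\subseteq H_v$, so $H_v$ is a support.

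I would obtain the inclusion $H_v^{int}\subseteq D$ directly from this. If $\langle x,v\rangle<0$ then $x\notin -D$, for otherwise $-x\in D\subseteq H_v$ would give $\langle x,v\rangle\ge 0$; since $V=D\cup(-D)$, this forces $x\in D$, proving $H_v^{int}\subseteq D$. The identity $H_v=\bar D$ then comes from the chain $H_v^{int}\subseteq D\subseteq \bar D\subseteq H_v$: taking closures and using $\overline{H_v^{int}}=H_v$ (valid since $v\neq 0$) collapses it to $\bar D=H_v$. Finally, uniqueness is immediate from Lemma \ref{hyper}: if $H_w$ is any support, then $D\subseteq H_w$ gives $H_v=\bar D\subseteq H_w$, whence $H_v=H_w$.

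The main obstacle is controlling the behaviour of $D$ on the boundary hyperplane $H(v)$, where $D$ itself need not be closed. The separation step only yields $\bar D\subseteq H_v$, and a priori $\bar D$ could be a proper subset of $H_v$ differing from it inside $H(v)$; the device that closes this gap is the inclusion $H_v^{int}\subseteq D$, which pins the entire open half space inside $D$ and lets the closure operation recover all of $H_v$. Establishing the disjointness $\operatorname{int}(-D)\cap\bar D=\emptyset$, and with it the non‑empty interior and the fact that $0$ is a boundary point, is the one place where pointedness of $D$ enters in an essential and slightly delicate way, and it is precisely what legitimises the appeal to the supporting hyperplane theorem.
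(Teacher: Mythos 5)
Your proof is correct, and its core coincides with the paper's: once a support $H_v$ is in hand, both arguments derive $H_v^{int}\subseteq D$ from $V=D\cup(-D)$ (a point $x$ with $\langle x,v\rangle<0$ cannot lie in $-D$, else $-x\in D\subseteq H_v$ gives a contradiction), then collapse the chain $H_v=\overline{H_v^{int}}\subseteq\bar D\subseteq H_v$ to get $\bar D=H_v$, and read off uniqueness. Where you genuinely diverge is the existence of a support: the paper simply cites Corollary 1 to Theorem 1 of Fenchel's notes, whereas you manufacture the support yourself by showing (i) $D$ has non-empty interior (since $V$ is not a union of two affine hyperplanes), (ii) $\bar D\cap\operatorname{int}(-D)=\emptyset$, hence $\bar D\neq V$, and (iii) $0\in\partial\bar D$ (a ball about $0$ inside a cone scales up to all of $V$), and then invoking the supporting hyperplane theorem at the boundary point $0$. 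This buys self-containedness: your only external input is the standard supporting hyperplane theorem, and the preliminary facts you establish are precisely the geometric content hidden in the citation to Fenchel; the cost is the extra page of work, all of which is sound (both proofs tacitly assume $\dim V\geq 1$). Two minor points: in your disjointness argument, the phrase ``for every such $\epsilon$'' should be understood as ``for arbitrarily small $\epsilon$'' (any smaller radius still satisfies $B(q,\epsilon)\subseteq -D$), which is what actually forces $q=0$; and your uniqueness step is a slight streamlining of the paper's, since you deduce $H_v=\bar D\subseteq H_w$ and then apply Lemma \ref{hyper}, while the paper reruns its argument to get $\bar D=H_w$ directly --- both are valid.
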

\begin{proof} According to Corollary 1 to Theorem 1 of Fenchel \cite{Fenchel},  a maximal pointed convex cone $D$  in $V$,  has a support $H_{v}$, where 
$D \subseteq H_{v}$. Now since $D \cup (-D) = V$ we have $H_v^{int} = \{x \in V | <x, v> < 0 \} \subseteq D$. Otherwise we would have $x \in (-D)$ with $<x, v> < 0$ and thus $<-x, v> > 0$ for $-x \in D$ giving us a contradiction. Thus we have 
$\overline{H_v^{int}}  =  H_{v} \subseteq \bar{D}$.  On the other hand $\bar{D} \subseteq H_{v}$  since $D \subseteq H_{v}$,
therefore $\bar{D} = H_v$. 

 Suppose that $H_{u}$ is also a support. Then as above, we have $\bar{D} = H_u$ and hence $H_u = H_v$. 
\end{proof}

\begin{theorem} \label{equivalence} Let $V$ be a finite dimensional Euclidean  space of dimension $n$ with positive definite inner product $< - , - >$. The following are equivalent:
\begin{enumerate}
\item D is a maximal pointed convex cone in V,
\item There exists an ordered basis $\{v_1, v_2, \dots , v_n \}$ of $V$ with $$D = \{c_1v_1 + c_2v_2 + \dots + c_nv_n | c_i \in \Bbb{R},  c_m < 0 \ \mbox{where} \ m = \mbox{max}\{i | 
c_i \not= 0 \} \} \cup \{0\},$$
\item There exists an ordered  orthonormal basis $\{v_1, v_2, \dots , v_n \}$ of $V$ with $$D = \{c_1v_1 + c_2v_2 + \dots + c_nv_n | c_i \in \Bbb{R},  c_m < 0 \ \mbox{where} \ m = \mbox{max}\{i | 
c_i \not= 0 \} \}  \cup \{0\}.$$
\end{enumerate}
Moreover the map taking $D$ to $\{v_1, v_2, \dots , v_n \}$ is a bijection from the maximal pointed convex cones in $V$ to the set of  ordered  orthonormal bases of $V$. 
The orthogonal  group $O(V)$ acts simply transitively on the set of maximal pointed convex cones in $V$. 
\end{theorem}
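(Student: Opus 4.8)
The plan is to prove the cycle of implications $(3)\Rightarrow(2)\Rightarrow(1)\Rightarrow(3)$ and then deduce the bijection and the group action as formal consequences. The implication $(3)\Rightarrow(2)$ is immediate, since an orthonormal basis is in particular an ordered basis. For $(2)\Rightarrow(1)$ I would verify the four defining properties directly from the description of $D$ in terms of the ordered basis $\{v_1,\dots,v_n\}$. Writing each nonzero vector as $x=\sum_i c_iv_i$ and calling $c_m$, with $m=\max\{i\mid c_i\neq 0\}$, its \emph{leading coefficient}, the conditions $0\in D$ and closure under multiplication by $\Bbb{R}_{\geq 0}$ are clear. For closure under addition (convexity) I compare the leading coefficients of $u=\sum a_iv_i$ and $w=\sum b_iv_i$: if the two top nonzero indices differ, the larger one survives in $u+w$ with its negative coefficient unchanged, and if they agree, the leading coefficient of $u+w$ is a sum of two negatives; in every case the leading coefficient of $u+w$ is negative, so $u+w\in D$. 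Pointedness $D\cap(-D)=\{0\}$ holds because negation flips the sign of the leading coefficient, and maximality $D\cup(-D)=V$ holds because every nonzero $x$ has a nonzero leading coefficient of one sign or the other. This argument simultaneously establishes $(3)\Rightarrow(1)$.

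The substantive step is $(1)\Rightarrow(3)$, which I would prove by induction on $n=\dim V$, the case $n\le 1$ being immediate. Given a maximal pointed convex cone $D$, Theorem \ref{unique} supplies a unique support $H_v$ with $\bar D=H_v$ and $H_v^{int}=\{x\mid \langle x,v\rangle<0\}\subseteq D$; after rescaling (which does not change $H_v$) I may assume $\|v\|=1$ and set $v_n:=v$. The key sublemma is that $D\cap H(v)$ is a maximal pointed convex cone inside the $(n-1)$-dimensional Euclidean space $H(v)=v^{\perp}$: it is visibly a convex cone, it is pointed because $D\cap H(v)\cap(-(D\cap H(v)))\subseteq D\cap(-D)=\{0\}$, and it is maximal because for nonzero $x\in H(v)$ the relation $D\cup(-D)=V$ places $x$ in $D\cap H(v)$ or in $(-D)\cap H(v)=-(D\cap H(v))$, using that $H(v)$ is stable under negation. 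By the inductive hypothesis $D\cap H(v)$ is presented by an ordered orthonormal basis $\{v_1,\dots,v_{n-1}\}$ of $H(v)$ as in $(3)$, whence $\{v_1,\dots,v_{n-1},v_n\}$ is an ordered orthonormal basis of $V$. I would finish by checking that the cone $D'$ it presents equals $D$: for $x=\sum c_iv_i$ orthonormality gives $\langle x,v\rangle=c_n$, so when $c_n\neq 0$ the sign of $c_n$ decides membership in $H_v^{int}\subseteq D$ versus its complement, matching $(3)$, while the case $c_n=0$ is exactly the slice $D\cap H(v)$ handled by induction. I expect the main obstacle here to be the bookkeeping: fixing the orientation of $v_n$ so that $c_n<0$ lands in $H_v^{int}\subseteq D$ rather than in $-D$, and checking the three cases $c_n<0$, $c_n>0$, $c_n=0$ against the definition of $D'$.

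Having the equivalences, I would next show the assignment $D\mapsto\{v_1,\dots,v_n\}$ is a well-defined bijection. Well-definedness is a uniqueness statement: for any cone presented as in $(3)$ one computes $H_{v_n}^{int}\subseteq D\subseteq H_{v_n}$, hence $\bar D=H_{v_n}$; so if $\{v_i\}$ and $\{w_i\}$ both present $D$, then $H_{v_n}=\bar D=H_{w_n}$, and Lemma \ref{hyper} (uniqueness of the unit normal of a half-space) forces $v_n=w_n$. Restricting to the common hyperplane $H(v_n)$ and applying the inductive hypothesis to $D\cap H(v_n)$ then gives $v_i=w_i$ for $i<n$. Injectivity is immediate since $D$ is recovered from its basis via $(3)$, and surjectivity follows because any ordered orthonormal basis presents, by $(3)\Rightarrow(1)$, a maximal pointed convex cone whose associated basis is, by the uniqueness just proved, the given one.

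Finally, for the group action I would observe that each $T\in O(V)$ carries maximal pointed convex cones to maximal pointed convex cones, being a linear isometry and hence preserving $0$, sums, nonnegative scalings, intersections, unions, and negation, and that the bijection $D\mapsto\{v_i\}$ is $O(V)$-equivariant, since $T(D)$ is presented via $(3)$ by $\{Tv_1,\dots,Tv_n\}$. As $O(V)$ is classically simply transitive on the set of ordered orthonormal bases of $V$, transporting this action through the equivariant bijection yields that $O(V)$ is simply transitive on maximal pointed convex cones. Concretely, transitivity comes from choosing $T$ with $Tv_i=w_i$, and freeness from the fact that $T(D)=D$ forces $T$ to fix the basis of $D$ and hence to be the identity.
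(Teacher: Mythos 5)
Your proposal is correct and follows essentially the same route as the paper: the same cycle of implications, the same induction on dimension using the unique support $H_{v_n}$ from Theorem \ref{unique} and the hyperplane slice $D \cap H(v_n)$, the same use of Lemma \ref{hyper} to pin down $v_n$ for uniqueness of the basis, and the same equivariance argument transporting simple transitivity from ordered orthonormal bases to cones. The only difference is that you spell out details (the leading-coefficient verification of $(2)\Rightarrow(1)$ and the sublemma that the slice is a maximal pointed convex cone) that the paper dismisses as routine.
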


\begin{proof}  The result is obvious if $n = 1$, since the only maximal pointed convex cones of $\Bbb{R}e_1$ are 
$\Bbb{R}_{\geq 0}e_1$ and $\Bbb{R}_{\geq 0}(-e_1)$. We use induction on the dimension of $V$. 

Let $D$ be   a maximal pointed convex cone   in $V$. From Theorem \ref{unique}  we know that  $D$   has a unique  support $H_{v_{n }}$, where 
$D \subseteq H_{v_{n }}$ and $H_{v_n}^{int}  \subseteq D$. From Lemma \ref{hyper} we know that we can choose a $v_n$ which is  unique with the property that 
  $-v_n \in D$,    $||v_n|| = 1$ and  $D \subseteq H_{v_{n }}$.

Now consider the set $\widetilde{D} = D \cap H(v_{n })$. It is not difficult to see that $\widetilde{D}$ is a maximal pointed convex cone in 
$H(v_{n })$.   By induction, we get an orthonormal  basis $\{v_1, v_2, \dots , v_{n - 1 }\}$ for $H(v_{n })$ such that  
$$\widetilde{D} =  \{x \in H(v_{n }) | <x, v_m> < 0 \ \mbox{for} \ m = \mbox{max}\{i | <x, v_m> \not= 0\}, 1\leq i \leq n -1 \} \cup \{0\}.$$
Now   $D = \{x \in V | <x, v_{n }>  < 0\} \cup \widetilde{D}$ and  
$$D =   \{x \in V | <x, v_m> < 0 \ \mbox{for} \ m = \mbox{max}\{i | <x, v_m> \not= 0\} , 1\leq i \leq n \} \cup \{0\}$$
where  $\{v_1, v_2, \dots , v_n\}$ is an orthonormal basis for $V$. 
This proves that (1) implies (3). 

That (3) implies (2) is trivial.  To show that (2) implies (1), let $\{v_1, v_2, \dots , v_n \}$ be an ordered basis of $V$ and let  $$D = \{c_1v_1 + c_2v_2 + \dots + c_nv_n | c_m < 0 \ \mbox{where} \ m = \mbox{max}\{i | 
c_i \not= 0 \} \}  \cup \{0\}.$$ 
It is trivial to check that $D$ is a maximal pointed convex cone in $V$. Therefore (2) implies (1). 

Let $D$ be a maximal pointed convex cone in $V$ with 
$$D = \{c_1v_1 + c_2v_2 + \dots + c_nv_n | c_m < 0 \ \mbox{where} \ m = \mbox{max}\{i | 
c_i \not= 0 \} \}  \cup \{0\} $$
where $\{v_1, v_2, \dots , v_n\}$ is an orthonormal basis of $V$. 
Suppose $\{u_1, u_2, \dots , u_n\}$ is another orthonormal basis of $V$ for which 
$$D = \{c_1u_1 + c_2u_2 + \dots + c_nu_n | c_m < 0 \ \mbox{where} \ m = \mbox{max}\{i | 
c_i \not= 0 \} \}  \cup \{0\}. $$

We have both $H_{u_n}$ and $H_{v_n}$ are supporting half spaces  for $D$. Therefore,
by Theorem \ref{unique} we have $H_{u_n} = H_{v_n}$ and since $||u_n|| = ||v_n|| = 1$, by Lemma \ref{hyper}, $u_n = v_n$.  Now $ \{u_1, u_2, \dots , u_{n-1} \}$ and $ \{v_1, v_2, \dots , v_{n-1} \}$ are ordered orthonormal bases of $H = H(u_n) = H(v_n)$ and $\widetilde{D} = D \cap H $ is a maximal pointed convex cone in $H$ with 
$$\widetilde{D} = \{c_1v_1 + c_2v_2 + \dots + c_{n- 1}v_{n-1} | c_m < 0 \ \mbox{where} \ m = \mbox{max}\{i | 
c_i \not= 0 \} \}  \cup \{0\}$$
$$= \{c_1u_1 + c_2u_2 + \dots + c_{n- 1}u_{n-1} | c_m < 0 \ \mbox{where} \ m = \mbox{max}\{i | 
c_i \not= 0 \} \}  \cup \{0\}.$$
Hence by induction, we have $ \{u_1, u_2, \dots , u_{n-1} \}$ and $ \{v_1, v_2, \dots , v_{n-1} \}$ are equal as 
ordered orthonormal bases of $H$  and hence $ \{u_1, u_2, \dots , u_{n-1}, u_n \}$ and\\
 $ \{v_1, v_2, \dots , v_{n-1}, v_n \}$ are equal as 
ordered orthonormal bases of $V$.  This shows that the map taking maximal pointed convex cones to ordered orthonormal bases given above is one to one. The map is onto since (3) implies (1). 

Since the orthogonal group $O(V)$ acts simply transitively on the set of ordered orthonormal bases of $V$, and 
by linearity the action commutes with the above map from ordered orthonormal bases to maximal pointed convex cones in $V$, 
we have a simple transitive action of $O(V)$ on the set of maximal  pointed convex cones in $V$. 
\end{proof}

\section{The poset $\mathcal{L}_n$.}

For $n \geq 0$ we let $\mathcal{L}_n$ denote  the following  poset of subsets of $S^n$ ordered by inclusion:
$$\mathcal{L}_n = \{T(\Phi^-_n) \cap \Phi_n^+ | T\in O_{n + 1} \} .$$

Note that $\Phi_n^+ = T(\Phi^-_n)\cap \Phi_n^+$ when $T = -I_{\Bbb{R}^{n + 1}}$.   This  is a maximal element of $\mathcal{L}_n$. 
If $T \in O_{n + 1}$ with $T(e_i) = v_i$, $1 \leq i \leq n + 1$, we let 
$$D_T = \{ v = \sum_{i = 1}^{n + 1}c_iv_i \in \Bbb{R}^{n + 1} | <v, v_m> = c_m < 0 \ \mbox{where } \  m = \mbox{max}\{i | c_i \not= 0\}\} \cup \{0\}.$$
It is not difficult to show that $D_T $ is a maximal pointed convex cone in $\Bbb{R}^{n + 1}$ with 
$D_T \cap S^{n} =  T(\Phi_n^-)$. 

\begin{lemma} \label{mpcc} Let $X \in \mathcal{L}_n$, with $X = T_X(\Phi_n^-) \cap \Phi_n^+$, then there is a unique maximal 
pointed convex   cone 
$D_X = D_{T_X}$ in $\Bbb{R}^{n + 1}$ with the property that $X = D_X \cap \Phi_n^+$.
\end{lemma}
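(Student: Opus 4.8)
The plan is to treat existence and uniqueness separately, the former being essentially immediate and the latter carrying the bulk of the argument. For existence I would simply take $D_X := D_{T_X}$. By the remark immediately preceding the lemma, $D_{T_X}$ is a maximal pointed convex cone in $\Bbb{R}^{n+1}$ with $D_{T_X}\cap S^n = T_X(\Phi_n^-)$. Since $\Phi_n^+\subseteq S^n$, intersecting with $\Phi_n^+$ gives $D_X\cap\Phi_n^+ = (D_{T_X}\cap S^n)\cap\Phi_n^+ = T_X(\Phi_n^-)\cap\Phi_n^+ = X$, which exhibits a maximal pointed convex cone with the required property.

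For uniqueness, suppose $D$ is \emph{any} maximal pointed convex cone with $D\cap\Phi_n^+ = X$; I would show that $D$ is forced to equal a set determined by $X$ alone. The key observation is that the defining conditions $D\cap(-D)=\{0\}$ and $D\cup(-D)=\Bbb{R}^{n+1}$ together imply that each point $x\in S^n$ lies in exactly one of $D$ and $-D$; equivalently, for each antipodal pair $\{x,-x\}\subseteq S^n$ exactly one member belongs to $D$. Using the partition $S^n = \Phi_n^+\cup\Phi_n^-$ together with $\Phi_n^- = -\Phi_n^+$, I would split $D\cap S^n = (D\cap\Phi_n^+)\cup(D\cap\Phi_n^-) = X\cup(D\cap\Phi_n^-)$ and pin down the second, unknown piece. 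For $y\in\Phi_n^-$ we have $-y\in\Phi_n^+$, and $y\in D$ if and only if $-y\notin D$, i.e. if and only if $-y\in\Phi_n^+\setminus X$. Hence $D\cap\Phi_n^- = -(\Phi_n^+\setminus X)$, so that $D\cap S^n = X\cup\bigl(-(\Phi_n^+\setminus X)\bigr)$ depends only on $X$. Since $D$ is a cone, Lemma \ref{hatcone}(1) yields $D = \widehat{D\cap S^n}$, and therefore $D$ itself is determined by $X$. Applying this both to $D_X = D_{T_X}$ and to any competing cone gives uniqueness, and incidentally shows that $D_{T_X}$ is independent of the particular choice of $T_X$ representing $X$.

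The main obstacle is precisely the uniqueness step, and within it the recovery of $D\cap\Phi_n^-$ from the complement $\Phi_n^+\setminus X$: the whole argument rests on the fact that a \emph{maximal} pointed convex cone, unlike a general pointed convex cone, meets each antipodal pair on the sphere in exactly one point, so that knowing $D$ on the positive hemisphere $\Phi_n^+$ forces its values on the negative hemisphere as well. Once this antipodal dichotomy is established, the passage from $D\cap S^n$ back to $D$ via Lemma \ref{hatcone} is routine.
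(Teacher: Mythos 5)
Your proof is correct and follows essentially the same route as the paper's: both arguments reconstruct the whole cone from $X$ by using the defining dichotomy $D\cap(-D)=\{0\}$, $D\cup(-D)=\Bbb{R}^{n+1}$ to force the "negative" part to be the negation of the complement, and both then invoke Lemma \ref{hatcone}(1) to pass between subsets of $S^n$ and cones. The only difference is presentational: you run the complementation argument pointwise on antipodal pairs of the sphere and lift to cones at the end, whereas the paper works directly with the cones $C_1 = D_X\cap D_e$ and $C_1' = D_e\cap(-D_X)$ and shows $D_X = C_1\cup(-C_1') = D$.
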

\begin{proof} From its definition, we have that $D_{T_X}$ is a maximal pointed convex cone in $\Bbb{R}^{n + 1}$ with  $D_{T_X} \cap \Phi_n^+ =  T_X(\Phi_n^-)\cap \Phi_n^+  = X$. Suppose that $D$ is another maximal pointed convex cone in 
$\Bbb{R}^{n + 1}$ with $X = D \cap \Phi_n^+$. If  $D_e = D_{-I_{\Bbb{R}^{n + 1}}}$, then $\Phi_n^+ = D_e \cap S^n$ and 
$X = D_X \cap D_e \cap S^n = D \cap D_e \cap S^n$. Now $C_1 = D_X \cap D_e$ and $C_2 = D \cap D_e$  are both convex  cones in $\Bbb{R}^{n + 1}$ with $X = C_1 \cap S^n = C_2 \cap S^n$. By Lemma \ref{hatcone}, $\hat{X} = C_1 = C_2$. Let $C'_1 = (D_e \backslash C_1) \cup \{0\} = D_e \cap  (-D_X)$.
 Now  
 $$[C_1 \cup (-C'_1) ]
\cup [C'_1 \cup (-C_1)] = D_e \cup (-D_e) =  \Bbb{R}^{n + 1}$$ and
 $$[C_1 \cup (-C'_1) ]
\cap [C'_1 \cup (-C_1)] = \{0\}$$
since $C_1 \cup (-C'_1)  \subseteq D_X$ and $C'_1 \cup (-C_1) \subseteq -D_X$. Since $D_X \cap [C'_1 \cup (-C_1)] = \{0\}$, we can conclude that $D_X = C_1 \cup (-C'_1)$.

 Since $C'_1 = (D_e \backslash C_2) \cup \{0\} = D_e \cap  (-D)$, we can similarly conclude that $D = C_1 \cup (-C'_1)$ and hence $D = D_X$. 

\end{proof}

\begin{definition} \label{mpccX} We define    $D_e = D_{-I_{\Bbb{R}^{n + 1}}}$, that is 
$$D_e =  \{ x = \sum_{i = 1}^{n + 1}c_ie_i \in \Bbb{R}^{n + 1} | <x, e_m> = c_m > 0 \ \mbox{where } \  m = \mbox{max}\{i | c_i \not= 0\}\} \cup \{0\}$$
and $D_e \cap S^n = \Phi_n^+$. 
For  $X \in \mathcal{L}_n$ we let $D_X$ denote the unique maximal pointed convex cone in $\Bbb{R}^{n + 1}$ for which $X = D_X \cap \Phi_n^+ = D_X \cap D_e \cap S^n$. 
\end{definition}

 Therefore we can characterize $\mathcal{L}_n$ as
$$\mathcal{L}_n = \{D\cap \Phi_n^+ | D \ \mbox{is a maximal pointed convex cone in } \ \Bbb{R}^{n + 1}\}.$$
We will alternate between these characterizations of $\mathcal{L}_n$ as is convenient. 

\begin{example}
 If $n = 0$, then $\Bbb{R}^{n + 1} = \Bbb{R}^1 = \Bbb{R}e_1$. We have $S^0 = \{-e_1, e_1\}, \Phi_0^+ = \{e_1\}$ and $O_1 
= \{\pm 1_{e_1}\}$.  Therefore $\mathcal{L}_0 = \{\emptyset, \{e_1\}\}$. 
\end{example}

\begin{lemma}\label{duality} Let 
 $X \in \mathcal{L}_n$, then $\Phi^+_n \backslash X$ is also an element of $\mathcal{L}_n$.  The map $\phi: \mathcal{L}_n \to \mathcal{L}_n^{\delta}$ given by 
$\phi(X) = \Phi^+_n \backslash X$ is an order  isomorphism of posets. 
\end{lemma}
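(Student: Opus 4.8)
The plan is to work with the characterization of $\mathcal{L}_n$ as $\{D\cap\Phi_n^+\mid D \text{ a maximal pointed convex cone in }\Bbb{R}^{n+1}\}$ together with the uniquely determined cone $D_X$ of Definition \ref{mpccX}, and to show that complementation inside $\Phi_n^+$ corresponds exactly to negation of cones. The crux is the set-theoretic identity $\Phi_n^+\backslash X=(-D_X)\cap\Phi_n^+$.

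First I would record that whenever $D$ is a maximal pointed convex cone, so is $-D$: each of the defining conditions ($0\in D$, closure under $\Bbb{R}_{\geq0}$-scaling, closure under addition, $D\cap(-D)=\{0\}$, and $D\cup(-D)=V$) is manifestly invariant under $v\mapsto -v$. Then I would establish the identity above. Write $X=D_X\cap\Phi_n^+$, and note $0\notin\Phi_n^+$. A point $x\in\Phi_n^+$ lies in $\Phi_n^+\backslash X$ exactly when $x\notin D_X$; since $D_X$ is maximal we have $x\in D_X\cup(-D_X)=\Bbb{R}^{n+1}$ and $D_X\cap(-D_X)=\{0\}$, so for $x\neq 0$ the conditions $x\notin D_X$ and $x\in -D_X$ are equivalent. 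Hence $\Phi_n^+\backslash X=(-D_X)\cap\Phi_n^+$, and because $-D_X$ is a maximal pointed convex cone this set lies in $\mathcal{L}_n$; moreover, by the uniqueness in Lemma \ref{mpcc}, $D_{\Phi_n^+\backslash X}=-D_X$. This proves the first assertion.

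For the isomorphism statement, I would first note that $\phi$ is an involution: $\phi(\phi(X))=\Phi_n^+\backslash(\Phi_n^+\backslash X)=X$, using $X\subseteq\Phi_n^+$, so $\phi$ is a self-inverse bijection of $\mathcal{L}_n$ and in particular maps onto $\mathcal{L}_n^{\delta}$. It then remains to verify the order condition in the definition of order isomorphism, namely that $X\subseteq Y$ in $\mathcal{L}_n$ holds if and only if $\phi(X)\leq_{\mathcal{L}_n^{\delta}}\phi(Y)$. Unwinding the definition of the dual order, $\phi(X)\leq_{\mathcal{L}_n^{\delta}}\phi(Y)$ means $\phi(Y)\subseteq\phi(X)$, that is $\Phi_n^+\backslash Y\subseteq\Phi_n^+\backslash X$; and for subsets of the fixed ambient set $\Phi_n^+$ this is equivalent to $X\subseteq Y$. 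This gives the required equivalence in both directions, so $\phi$ is an order isomorphism onto $\mathcal{L}_n^{\delta}$.

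I expect the only real content to lie in the identity $\Phi_n^+\backslash X=(-D_X)\cap\Phi_n^+$, which is where the maximality of $D_X$ (the conditions $D_X\cap(-D_X)=\{0\}$ and $D_X\cup(-D_X)=\Bbb{R}^{n+1}$) is genuinely used; once the cone characterization of $\mathcal{L}_n$ and the uniqueness of $D_X$ from Lemma \ref{mpcc} are in hand, this follows immediately. The order-isomorphism part is then purely formal, resting only on the facts that complementation within a fixed set reverses inclusions and is its own inverse.
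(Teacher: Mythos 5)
Your proposal is correct and follows essentially the same route as the paper: both pass to a maximal pointed convex cone $D$ with $X = D\cap\Phi_n^+$, observe that $-D$ is again a maximal pointed convex cone satisfying $(-D)\cap\Phi_n^+ = \Phi_n^+\backslash X$, and then obtain the order isomorphism from the fact that complementation within $\Phi_n^+$ is an inclusion-reversing involution. The only difference is one of detail: you explicitly verify the identity $\Phi_n^+\backslash X=(-D_X)\cap\Phi_n^+$ via the maximality conditions $D_X\cap(-D_X)=\{0\}$ and $D_X\cup(-D_X)=\Bbb{R}^{n+1}$, which the paper simply asserts.
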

\begin{proof} Let 
 $X \in \mathcal{L}_n$. 
 There exists a maximal pointed convex cone $D$  in $\Bbb{R}^{n + 1}$ for which $X = D \cap \Phi_n^+$. 
 We have that $(-D)$ is also a maximal pointed convex cone in $\Bbb{R}^{n + 1}$ and $(-D)\cap  \Phi_n^+ = \Phi_n^+ \backslash X$. 
 Hence $\Phi_n^+ \backslash X \in \mathcal{L}_n$. Since $\phi^2(X) = X$, it is easy to see that $\phi$ is a bijective map. 
 Since $ \Phi_n^+ \backslash X \subseteq  \Phi_n^+ \backslash Y$ if and only if $X \supseteq Y$, we have an isomorphism of posets 
$\phi: \mathcal{L}_n \to \mathcal{L}_n^{\delta}$. 
\end{proof}

\begin{theorem} \label{projection}
Let $X$ be an element of $\mathcal{L}_n$, then $X \cap \Bbb{R}^{n }$ is an 
element of $\mathcal{L}_{n - 1}$, where $\Bbb{R}^n $ is imbedded  in   $\Bbb{R}^{n + 1} $ as in Section \ref{intro}.
\end{theorem}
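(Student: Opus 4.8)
The plan is to abandon the description of $\mathcal{L}_n$ in terms of elements of $O_{n+1}$ and work instead with the cone description recorded after Definition \ref{mpccX}, namely
$$\mathcal{L}_{k} = \{D \cap \Phi_k^+ \mid D \text{ is a maximal pointed convex cone in } \Bbb{R}^{k+1}\}$$
for every $k \geq 0$. Under the imbedding of $\Bbb{R}^n$ into $\Bbb{R}^{n+1}$ as $\{x \mid x_{n+1} = 0\}$, a point $x \in S^n$ lies in $\Bbb{R}^n$ exactly when $x_{n+1} = 0$, in which case $\max\{i \mid x_i \neq 0\} \leq n$; comparing the defining inequalities of $\Phi_n^+$ and $\Phi_{n-1}^+$ then gives
$$\Phi_n^+ \cap \Bbb{R}^n = \Phi_{n-1}^+,$$
where $\Phi_{n-1}^+$ is viewed inside $\Bbb{R}^n = \mathrm{span}(e_1, \dots, e_n)$. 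Writing $X = D \cap \Phi_n^+$ for the maximal pointed convex cone $D = D_X$ of Definition \ref{mpccX}, I would then use $\Phi_{n-1}^+ \subseteq \Bbb{R}^n$ to rewrite
$$X \cap \Bbb{R}^n = D \cap \Phi_n^+ \cap \Bbb{R}^n = D \cap \Phi_{n-1}^+ = (D \cap \Bbb{R}^n) \cap \Phi_{n-1}^+.$$
In view of the cone description of $\mathcal{L}_{n-1}$, it then suffices to prove that $D \cap \Bbb{R}^n$ is a maximal pointed convex cone in $\Bbb{R}^n$.

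The heart of the matter is therefore the following general fact, which I would isolate and prove: if $D$ is a maximal pointed convex cone in a Euclidean space $V$ and $W \subseteq V$ is any linear subspace, then $D \cap W$ is a maximal pointed convex cone in $W$. The conditions of Definition \ref{cones} are all preserved under intersection with $W$: the set $D \cap W$ contains $0$, is stable under multiplication by $\Bbb{R}_{\geq 0}$, and is closed under addition, since both $D$ and $W$ have these properties. Pointedness is immediate, because $(D \cap W) \cap (-(D \cap W)) = (D \cap (-D)) \cap W = \{0\} \cap W = \{0\}$. The only condition that uses the maximality of $D$ is the identity $(D \cap W) \cup (-(D \cap W)) = W$: given $w \in W$, the maximality of $D$ gives $w \in D$ or $w \in -D$, and since $w \in W$ this places $w$ in $D \cap W$ or in $(-D) \cap W = -(D \cap W)$, respectively.

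Combining the two steps finishes the argument: $D \cap \Bbb{R}^n$ is a maximal pointed convex cone in $\Bbb{R}^n$, so $X \cap \Bbb{R}^n = (D \cap \Bbb{R}^n) \cap \Phi_{n-1}^+$ lies in $\mathcal{L}_{n-1}$. I do not expect a serious obstacle: every cone axiom except maximality is a routine intersection computation, and maximality reduces in one line to the defining property $D \cup (-D) = V$. The only point needing genuine care is the bookkeeping around the imbedding --- verifying that $\Phi_n^+ \cap \Bbb{R}^n$ is precisely $\Phi_{n-1}^+$, and that intersecting $D$ with $\Phi_{n-1}^+$ automatically restricts $D$ to $\Bbb{R}^n$ --- so that the cone $D \cap \Bbb{R}^n$ is paired with the correct positive system when we invoke the description of $\mathcal{L}_{n-1}$.
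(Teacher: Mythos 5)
Your proposal is correct and follows essentially the same route as the paper: pass to the cone description $X = D \cap \Phi_n^+$, intersect $D$ with $\Bbb{R}^n$, and identify $X \cap \Bbb{R}^n = (D \cap \Bbb{R}^n) \cap \Phi_{n-1}^+$ as an element of $\mathcal{L}_{n-1}$. The only difference is that the paper dismisses the key step with ``it is not difficult to see that $\widetilde{D}$ is a maximal pointed convex cone of $\Bbb{R}^n$,'' whereas you supply the (correct) verification, stated for an arbitrary subspace $W \subseteq V$.
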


\begin{proof} Given $X \in \mathcal{L}_n$, 
we have a maximal pointed convex cone  $D \subset \Bbb{R}^{n + 1}$ such that $X = D \cap  \Phi_n^+$.  Let $\widetilde{D} = D  \cap  \Bbb{R}^{n }$. It is not difficult to see that $\widetilde{D}$ is a maximal pointed convex cone  of $\Bbb{R}^{n }$ and therefore $X \cap  \Bbb{R}^{n }  = D  \cap  \Bbb{R}^{n } \cap \Phi_{n - 1}^+  = \widetilde{D}  \cap \Phi_{n - 1}^+  \in \mathcal{L}_{n - 1}$. 
\end{proof}

We will prove a series of technical lemmas which will be helpful when constructing the join of two elements in $\mathcal{L}_n$.

\begin{lemma} \label{boundary} Let $X \in \mathcal{L}_n$, with $X = T_X(\Phi^-_n)\cap \Phi^+_n$ for some $T_X \in O_{n + 1}$.
Let $u_X = T_X(e_{n + 1})$ and $e = -e_{n + 1}$.
 If $u_X \not=   e_{n + 1} = -e$, 
then 
\begin{enumerate}
\item $X = (X\cap H(u_X)) \cup \{x \in \Phi_n^+ | <x, u_X> < 0 \},$
\item $\bar{X} =  [H_{u_X} \cap H_{e}] \cap S^n$,
\item $\bar{X} \cap S^{n -1} = H_{u_X}\cap S^{n - 1} \supseteq H(u_X)\cap S^{n - 1}$.
\end{enumerate}
\end{lemma}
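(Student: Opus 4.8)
The plan is to translate the three assertions into statements about the two maximal pointed convex cones $D_X$ and $D_e$ and then feed them into the closure results of Section~3. First I would pin down the support half-spaces of both cones. Writing $v_i = T_X(e_i)$, so that $\{v_1,\dots,v_{n+1}\}$ is an orthonormal basis with $v_{n+1}=u_X$, the defining condition of $D_X=D_{T_X}$ shows that every nonzero $v=\sum c_iv_i\in D_X$ has $\langle v,u_X\rangle = c_{n+1}\leq 0$ (it equals $c_m<0$ when $m=n+1$ and $0$ otherwise). Hence $D_X\subseteq H_{u_X}$, so $H_{u_X}$ is a support for the maximal pointed convex cone $D_X$ (Lemma~\ref{mpcc}); Theorem~\ref{unique} then gives that $H_{u_X}$ is its \emph{unique} support, that $\bar D_X = H_{u_X}$, and that $H_{u_X}^{int}\subseteq D_X$. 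The same reasoning, or directly Definition~\ref{mpccX} together with Theorem~\ref{unique}, yields $H_e^{int}\subseteq D_e\subseteq H_e$ and $\bar D_e = H_e$.

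For (1), I would use $X = D_X\cap\Phi_n^+$ together with $D_X\subseteq H_{u_X}$. Every $x\in X$ satisfies $\langle x,u_X\rangle\leq 0$, so $x$ lies in $X\cap H(u_X)$ when the pairing is $0$ and in $\{x\in\Phi_n^+\mid \langle x,u_X\rangle<0\}$ otherwise, which gives the inclusion $\subseteq$. For the reverse inclusion, $X\cap H(u_X)\subseteq X$ is immediate, while any $x\in\Phi_n^+$ with $\langle x,u_X\rangle<0$ lies in $H_{u_X}^{int}\subseteq D_X$, hence in $D_X\cap\Phi_n^+=X$. This part does not use the hypothesis $u_X\neq e_{n+1}$.

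For (2), I would sandwich the convex cone $D_X\cap D_e$ between the corresponding open and closed half-space intersections. Since $X = D_X\cap D_e\cap S^n$ and $H_{u_X}^{int}\cap H_e^{int}\subseteq D_X\cap D_e\subseteq H_{u_X}\cap H_e$, taking closures and using that $H_{u_X}\cap H_e$ is closed reduces matters to the identity $\overline{H_{u_X}^{int}\cap H_e^{int}}=H_{u_X}\cap H_e$, from which $\overline{D_X\cap D_e}=H_{u_X}\cap H_e$ follows. That identity is exactly Lemma~\ref{closure}, whose hypothesis $u_X\neq -c\,e$ for $c>0$ is equivalent, because $\|u_X\|=\|e\|=1$, to the standing assumption $u_X\neq e_{n+1}$; this is the single place the hypothesis is used. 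Applying Lemma~\ref{cone} to the convex cone $A=D_X\cap D_e$ then gives $\bar X = \overline{A\cap S^n}=\bar A\cap S^n = [H_{u_X}\cap H_e]\cap S^n$.

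For (3), I would simply intersect the formula from (2) with $S^{n-1}=S^n\cap H(e_{n+1})$. Since $e=-e_{n+1}$, every $x\in S^{n-1}$ has $\langle x,e\rangle=0$ and so lies in $H_e$; thus $H_e\cap S^{n-1}=S^{n-1}$, whence $\bar X\cap S^{n-1}=H_{u_X}\cap S^{n-1}$, and the displayed inclusion is immediate from $H(u_X)\subseteq H_{u_X}$. The only genuinely non-routine step is the opening one: identifying $H_{u_X}$ as the support of $D_X$ by unwinding the definition of $D_{T_X}$, and recognizing that the hypothesis $u_X\neq e_{n+1}$ is precisely the non-degeneracy condition required to apply Lemma~\ref{closure}. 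Once those observations are in place, the rest is bookkeeping with the closure lemmas of Section~3.
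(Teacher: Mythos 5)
Your proposal is correct and follows essentially the same route as the paper: both prove (2) by sandwiching between $H_{u_X}^{int}\cap H_e^{int}$ and $H_{u_X}\cap H_e$ and invoking Lemma~\ref{cone} and Lemma~\ref{closure} (with the hypothesis $u_X\neq e_{n+1}$ entering only there), and both deduce (3) by intersecting with $S^{n-1}$. The only cosmetic differences are that you route part (1) and the half-space containments through Theorem~\ref{unique} applied to $D_X$ and $D_e$, where the paper unwinds the coordinate description of $T_X(\Phi_n^-)$, and you apply Lemma~\ref{cone} to $D_X\cap D_e$ rather than to the open cone $[H_{u_X}^{int}\cap H_e^{int}]\cup\{0\}$.
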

\begin{proof}
(1) follows from the fact that $X = T_X(\Phi^-_n) \cap \Phi^+_n$ and that $$
T_X(\Phi_n^-) = \{x \in S^n| x = \sum_{i = 1}^{n + 1}c_iT_X(e_i) \ \mbox{such that } \ c_m < 0 , \ \mbox{where} \ 
m = \mbox{max}\{i| c_i \not= 0\}\}.$$

Assertion (3) follows from (2) since, given (2), we have 
$$\bar{X}\cap S^{n -1} = [H_{u_X}\cap S^{n - 1}] \cap
[H_{e} \cap S^{n - 1}] \cap [S^n  \cap S^{n - 1}] = [H_{u_X}\cap S^{n - 1}]  \cap S^{n - 1}.$$ 

It remains to prove (2). 

 Let $C$ be the convex cone $[H_{u_X}^{int} \cap H_{e}^{int}] \cup \{0\}$, the intersection of the two open half spaces with the zero vector attached.  $C$ is non-empty by Lemma \ref{halfspaceintersect}.
Now  $C \cap S^{n} \subset X \subseteq [H_{u_X} \cap H_{e}]\cap S^n $ and hence $$
\overline{C \cap S^{n} }  \subseteq \overline{X}  \subseteq [H_{u_X} \cap H_{e}]\cap S^n .$$ 
By Lemma \ref{cone} 
$$\overline{C} \cap S^{n}   \subseteq \overline{X}  \subseteq [H_{u_X} \cap H_{e}]\cap S^n. $$
 By Lemma \ref{closure}, $\bar{C} = H_{u_X} \cap H_{e}$.
Therefore  $\overline{X}  =  [H_{u_X} \cap H_{e}]\cap S^n$
and the result follows.  
 \end{proof}

 \begin{lemma} \label{equality} 
  Let $X$, $Y \in \mathcal{L}_n$ with $X = T_X(\Phi_n^-) \cap \Phi_n^+$ and 
  $Y = T_Y(\Phi_n^-) \cap \Phi_n^+$. 
  Let $u_X = T_X(e_{n + 1})$  and let $u_Y = T_Y(e_{n + 1})$. 
  If $H(u_Y) \cap \Bbb{R}^{n } \not= \Bbb{R}^{n }$ and 
  $H(u_X) \cap \Bbb{R}^{n } \subseteq H(u_Y) \cap \Bbb{R}^{n }$, then 
  $$H(u_X) \cap \Bbb{R}^{n } = H(u_Y) \cap \Bbb{R}^{n }.$$
  \end{lemma}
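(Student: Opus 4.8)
The plan is to reduce everything to a dimension count inside $\Bbb{R}^{n}$. The essential observation, already used in the proof of Lemma \ref{closure}, is that if $\pi(u)$ denotes the orthogonal projection of a vector $u \in \Bbb{R}^{n+1}$ onto the subspace $\Bbb{R}^{n}$, then $u - \pi(u) \bot \Bbb{R}^{n}$, so that $<v, u> = <v, \pi(u)>$ for every $v \in \Bbb{R}^{n}$. Consequently
$$H(u) \cap \Bbb{R}^{n} = \{\, v \in \Bbb{R}^{n} \mid <v, \pi(u)> = 0 \,\},$$
which is all of $\Bbb{R}^{n}$ when $\pi(u) = 0$ and is a hyperplane of $\Bbb{R}^{n}$ (a subspace of dimension $n - 1$) when $\pi(u) \not= 0$.

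First I would apply this to $u_Y$. The hypothesis $H(u_Y) \cap \Bbb{R}^{n} \not= \Bbb{R}^{n}$ forces $\pi(u_Y) \not= 0$, so $H(u_Y) \cap \Bbb{R}^{n}$ is a subspace of $\Bbb{R}^{n}$ of dimension exactly $n - 1$. Next I would rule out the degenerate possibility for $u_X$: if $\pi(u_X) = 0$ then $H(u_X) \cap \Bbb{R}^{n} = \Bbb{R}^{n}$, and the containment $H(u_X) \cap \Bbb{R}^{n} \subseteq H(u_Y) \cap \Bbb{R}^{n}$ would place all of $\Bbb{R}^{n}$ inside a proper subspace, a contradiction. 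Hence $\pi(u_X) \not= 0$ and $H(u_X) \cap \Bbb{R}^{n}$ is likewise a subspace of dimension $n - 1$.

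Finally, the two subspaces $H(u_X) \cap \Bbb{R}^{n}$ and $H(u_Y) \cap \Bbb{R}^{n}$ both have dimension $n - 1$, and by hypothesis the first is contained in the second; a subspace contained in another subspace of the same finite dimension must coincide with it, which gives the desired equality. There is no real obstacle here: the only point requiring care is the verification that both intersections are genuine hyperplanes of $\Bbb{R}^{n}$ rather than all of $\Bbb{R}^{n}$, which is exactly what the projection identity above delivers, and the remainder is elementary linear algebra.
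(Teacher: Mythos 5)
Your proof is correct and takes essentially the same approach as the paper's: both arguments come down to the observation that each intersection is a subspace of $\Bbb{R}^{n}$ of dimension $n$ or $n-1$, that the hypotheses force both to have dimension exactly $n-1$, and that containment between subspaces of equal finite dimension forces equality. The only difference is cosmetic: you make the dimension dichotomy explicit via the projection identity $<v,u> = <v,\pi(u)>$ for $v \in \Bbb{R}^{n}$, which the paper leaves implicit.
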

  \begin{proof}
  We have $ H(u_Y)\cap \Bbb{R}^{n } $ is a subspace of $\Bbb{R}^{n}$ of dimension $n$ or of dimension $n - 1$ . Since we are assuming that  $H(u_Y) \cap \Bbb{R}^{n } \not= \Bbb{R}^{n }$, it must be a subspace of dimension $n - 1$. Since $H(u_X) \cap \Bbb{R}^n \subseteq H(u_Y) \cap \Bbb{R}^n$ it too must be a subspace of dimension $n - 1$ and hence must be all of $H(u_Y) \cap \Bbb{R}^n$. 
  \end{proof}

\begin{lemma}\label{Z}
Let $Z \in \mathcal{L}_n$, with $Z = T_Z(\Phi_n^-)\cap \Phi_n^+$ for some $T_Z \in O_{n + 1}$. 
Let $u_Z = T_Z(e_{n + 1})$.
If $H(u_Z) = \Bbb{R}^n$,   then 
$T_Z = T'_Z \bot \pm 1_{e_{n + 1}}$ for some $T'_Z \in O_n$.

\end{lemma}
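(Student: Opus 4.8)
The plan is to begin by pinning down $u_Z$ from the hypothesis. Under the embedding of Section \ref{intro}, the subspace $\Bbb{R}^n$ consists of the vectors with vanishing last coordinate, that is $\Bbb{R}^n = \{v \mid <v, e_{n+1}> = 0\} = H(e_{n+1})$. Thus the hypothesis $H(u_Z) = \Bbb{R}^n$ reads $H(u_Z) = H(e_{n+1})$. Since the orthogonal complement of a hyperplane in $\Bbb{R}^{n+1}$ is the one-dimensional line it determines, equality of these hyperplanes forces $\Bbb{R}u_Z = \Bbb{R}e_{n+1}$; as $u_Z = T_Z(e_{n+1})$ has norm one, we conclude $u_Z = \pm e_{n+1}$.

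Next I would show that $T_Z$ stabilizes $\Bbb{R}^n$. Let $v \in \Bbb{R}^n$, so that $v \bot e_{n+1}$. Using that $T_Z$ is orthogonal and $T_Z(e_{n+1}) = u_Z = \pm e_{n+1}$,
$$<T_Z(v), e_{n+1}> = \pm <T_Z(v), T_Z(e_{n+1})> = \pm <v, e_{n+1}> = 0,$$
so $T_Z(v) \in H(e_{n+1}) = \Bbb{R}^n$. Hence $T_Z(\Bbb{R}^n) \subseteq \Bbb{R}^n$, and since $T_Z$ is injective on the finite-dimensional space $\Bbb{R}^n$ it restricts to a bijection $T'_Z := T_Z|_{\Bbb{R}^n}$ of $\Bbb{R}^n$; being a restriction of an inner-product-preserving map, $T'_Z \in O_n$.

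Finally I would assemble the orthogonal decomposition. We have $\Bbb{R}^{n+1} = \Bbb{R}^n \bot \Bbb{R}e_{n+1}$, and $T_Z$ acts as $T'_Z$ on the first summand and, because $T_Z(e_{n+1}) = \pm e_{n+1}$, as $\pm 1_{e_{n+1}}$ on the second. By the uniqueness built into the definition of the orthogonal sum of transformations in Section \ref{intro}, this says exactly $T_Z = T'_Z \bot \pm 1_{e_{n+1}}$, as required.

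There is no serious obstacle in this argument; every step is forced once the hypothesis is correctly interpreted. The only place deserving a moment's attention is the first step, where one must note that $H(u_Z) = \Bbb{R}^n$ yields $u_Z = \pm e_{n+1}$ (and not merely $u_Z \in \Bbb{R}e_{n+1}$); this is what makes $T_Z(e_{n+1}) = \pm e_{n+1}$ and hence drives the remainder of the proof.
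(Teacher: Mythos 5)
Your proof is correct and follows essentially the same route as the paper's: both arguments deduce $u_Z = \pm e_{n+1}$ from the hypothesis $H(u_Z) = \Bbb{R}^n$ together with $\|u_Z\| = 1$, deduce $T_Z(\Bbb{R}^n) = \Bbb{R}^n$ from orthogonality of $T_Z$, and then assemble the decomposition $T_Z = T'_Z \bot \pm 1_{e_{n+1}}$. The only difference is the order of the two deductions (the paper first restricts $T_Z$ to $\Bbb{R}^n$ and then identifies $u_Z$; you do the reverse, using $u_Z = \pm e_{n+1}$ to show $T_Z$ stabilizes $\Bbb{R}^n$), which is immaterial.
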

\begin{proof}
 Since $T_Z$ is an isometry, with $T_Z(\Bbb{R}^n) = H(u_Z) =  \Bbb{R}^n$,  we have 
 $T_Z|_{\Bbb{R}^n} = T'_Z \in O_n$. Now $||u_Z|| = 1$ and $<u_Z, x> = 0$ for all $x \in  \Bbb{R}^n$.  Hence $u_Z = \pm e_{n + 1}$ and 
$T_Z = T'_Z\bot \pm 1_{e_{n + 1}}$. 
\end{proof}

\section{The least upper  bound of two elements of $\mathcal{L}_n$. }

We will use induction to find $X \vee Y$  for two elements $X$ and $Y$ of $\mathcal{L}_n$. Obviously we can find 
$X\vee Y$ in $\mathcal{L}_0$ for any pair of elements $\{X, Y\}$ in $\mathcal{L}_0$.

\begin{theorem}  \label{conditions}
Let $X, Y \in \mathcal{L}_n$ with $X = T_X(\Phi_n^-) \cap \Phi_n^+$ and $Y = T_Y(\Phi_n^-) \cap \Phi_n^+$, where $T_X, T_Y \in O_{n + 1}$. 
Let $u_X = T_X(e_{n + 1})$ and $u_Y = T_Y(e_{n + 1})$. 
Assume that
$H(u_X) \not= \Bbb{R}^{n}$ and $H(u_Y) \not= \Bbb{R}^{n}$. 

Then if $H(u_X) \cap \Bbb{R}^{n} \not= H(u_Y) \cap \Bbb{R}^{n}$
and  $Z \in \mathcal{L}_n$ has the property that $X\subseteq Z$ and $Y\subseteq Z$, where 
$Z = T_Z(\Phi_n^-)\cap \Phi_n^+$ for some $T_Z \in O_{n + 1}$,  we must have 
$$T_Z(\Bbb{R}^{n}) = \Bbb{R}^{n} \ \ \mbox{and} \ \ T_Z = T_Z' \bot (-1_{e_{n + 1}}).$$

\end{theorem}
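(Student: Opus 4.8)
The plan is to reduce everything to a statement about the ``equatorial'' normal directions, i.e. to track $u_X=T_X(e_{n+1})$, $u_Y=T_Y(e_{n+1})$, $u_Z=T_Z(e_{n+1})$ and their relation to the coordinate subspace $\Bbb{R}^{n}=\{x_{n+1}=0\}$. Note first that $H(u_X)\neq\Bbb{R}^{n}$ is equivalent to $u_X\neq\pm e_{n+1}$, and in that case $H(u_X)\cap\Bbb{R}^{n}$ is a genuine hyperplane of $\Bbb{R}^{n}$ (of dimension $n-1$); the same holds for $u_Y$. By Lemma \ref{Z}, the desired conclusion $T_Z=T_Z'\bot(-1_{e_{n+1}})$ is equivalent to $u_Z=-e_{n+1}$, and $T_Z(\Bbb{R}^n)=\Bbb{R}^n$ holds as soon as $u_Z=\pm e_{n+1}$. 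So I would prove the theorem by excluding the two competing possibilities for $u_Z$, namely $u_Z\neq\pm e_{n+1}$ (the substantive case) and $u_Z=+e_{n+1}$.

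The heart of the argument is ruling out $u_Z\neq\pm e_{n+1}$. In that case Lemma \ref{boundary}(3) applies to all three of $X$, $Y$, $Z$, giving $\bar X\cap S^{n-1}=H_{u_X}\cap S^{n-1}\supseteq H(u_X)\cap S^{n-1}$, and analogous identities for $Y$ and $Z$. From $X\subseteq Z$ we get $\bar X\subseteq\bar Z$, so intersecting with $S^{n-1}$ yields $H(u_X)\cap S^{n-1}\subseteq\bar Z\cap S^{n-1}=H_{u_Z}\cap S^{n-1}$. Hence every unit vector $x\in H(u_X)\cap\Bbb{R}^{n}$ satisfies $<x,u_Z>\leq 0$; applying this also to $-x$, which lies in the same set, upgrades the inequality to $<x,u_Z>=0$, so that $H(u_X)\cap\Bbb{R}^{n}\subseteq H(u_Z)\cap\Bbb{R}^{n}$. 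Since $u_Z\neq\pm e_{n+1}$ forces $H(u_Z)\cap\Bbb{R}^{n}\neq\Bbb{R}^{n}$, Lemma \ref{equality} (applied to the pair $X,Z$) gives $H(u_X)\cap\Bbb{R}^{n}=H(u_Z)\cap\Bbb{R}^{n}$. The identical reasoning with $Y$ in place of $X$ gives $H(u_Y)\cap\Bbb{R}^{n}=H(u_Z)\cap\Bbb{R}^{n}$, whence $H(u_X)\cap\Bbb{R}^{n}=H(u_Y)\cap\Bbb{R}^{n}$, contradicting the hypothesis. Therefore $u_Z\in\{\pm e_{n+1}\}$, so $H(u_Z)=\Bbb{R}^{n}$, $T_Z(\Bbb{R}^{n})=\Bbb{R}^{n}$, and $T_Z=T_Z'\bot(\pm1_{e_{n+1}})$ by Lemma \ref{Z}.

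It then remains to discard the sign $+$. If $T_Z=T_Z'\bot(+1_{e_{n+1}})$ then $u_Z=e_{n+1}$ and $D_Z\subseteq H_{e_{n+1}}=\{x_{n+1}\leq 0\}$; but every point of $\Phi_n^+$ with $x_{n+1}\neq 0$ has $x_{n+1}>0$, so any $x\in Z=D_Z\cap\Phi_n^+$ must satisfy $x_{n+1}=0$, i.e. $Z\subseteq S^{n-1}$. On the other hand, since $u_X\neq e_{n+1}$, Lemma \ref{halfspaceintersect} produces a point $p$ with $<p,u_X><0$ and $<p,-e_{n+1}><0$; as $H_{u_X}^{int}\subseteq D_X$ and $H_e^{int}\subseteq D_e$ by Theorem \ref{unique}, the normalization $p/||p||$ lies in $X$ and has positive last coordinate, so $X\not\subseteq S^{n-1}$. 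This contradicts $X\subseteq Z$, forcing $u_Z=-e_{n+1}$ and hence $T_Z=T_Z'\bot(-1_{e_{n+1}})$, as required.

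The step I expect to be the main obstacle is the passage from the set-inclusion $X\subseteq Z$ of subsets of $S^{n}$ to the inclusion of equatorial boundary hyperplanes in $\Bbb{R}^{n}$: this requires taking closures and correctly identifying $\bar X\cap S^{n-1}$ and $\bar Z\cap S^{n-1}$ through Lemma \ref{boundary}(3), which is itself only available after one has checked that the relevant normals avoid $e_{n+1}$. Once this dictionary is set up and the antipodal-symmetry observation converts the half-space condition into the hyperplane condition, Lemma \ref{equality} together with the dimension count closes the substantive case, and the exclusion of $u_Z=+e_{n+1}$ is a short direct verification.
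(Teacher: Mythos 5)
Your proof is correct and takes essentially the same route as the paper's: Lemma \ref{boundary}(3) plus the antipodal symmetry of $S^{n-1}$ and Lemma \ref{equality} force $H(u_Z)=\Bbb{R}^{n}$, after which a point of $Z$ with positive last coordinate rules out the sign $+1_{e_{n+1}}$. If anything, your organization is slightly more careful than the paper's: the paper applies Lemma \ref{boundary} to $Z$ before knowing that $u_Z\not=e_{n+1}$ (the hypothesis of that lemma), whereas you isolate the case $u_Z=e_{n+1}$ and dispose of it separately by showing $Z\subseteq S^{n-1}$ while $X\not\subseteq S^{n-1}$.
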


\begin{proof} Let $Z \in \mathcal{L}_n$ with  the property that $X\subseteq Z$ and $Y\subseteq Z$, where 
$Z = T_Z(\Phi_n^-)\cap \Phi_n^+$ for some $T_Z \in O_{n + 1}$. Let $u_Z = T_Z(e_{n + 1})$.
By Theorem \ref{boundary}  we have   $\bar{Y}\cap  \Bbb{R}^n = H_{u_Y} \cap S^{n - 1}$ and $\bar{Z}\cap  \Bbb{R}^n = H_{u_Z} \cap S^{n - 1}$ and since $\bar{Y} \subseteq \bar{Z}$, we must have $H_{u_Y} \cap S^{n - 1} \subseteq H_{u_Z} \cap S^{n - 1}$. If $H_{u_Z} \cap S^{n - 1} \not= S^{n - 1}$ then we must have that $H({u_Y}) \cap S^{n - 1} \subseteq H({u_Z}) \cap S^{n - 1}$, otherwise there exists $y \in H({u_Y}) \cap S^{n - 1}$ with $< y, u_Z> \not= 0$ and either $y$ or its antpodal point $-y$ is not in $ H_{u_Z} \cap S^{n - 1}$ giving a contradiction. 
Therefore by Lemma \ref{equality} we have $H({u_Y}) \cap S^{n - 1} = H({u_Z}) \cap S^{n - 1}$. By a similar argument 
we have $H({u_X}) \cap S^{n - 1} = H({u_Z}) \cap S^{n - 1}$. However 
by assumption, $H(u_X) \cap \Bbb{R}^{n} \not= H(u_Y) \cap \Bbb{R}^{n}$ and we have a contradiction. Therefore, we must have 
$$S^{n - 1} = \bar{Z}\cap  \Bbb{R}^n = H_{u_Z} \cap S^{n - 1} = H(u_Z) \cap S^{n - 1} \cup \{x \in S^{n - 1} | <x,  u_Z> < 0\}.$$
 If $x \in S^{n - 1}$ with $ <x,  u_Z> < 0$, then $-x \in S^{n - 1}$ and $<-x ,  u_Z> > 0$. Therefore $\{x \in S^{n - 1} | <x,  u_Z> < 0\} = \emptyset $ and $S^{n - 1} =  H(u_Z) \cap S^{n - 1}$. Consequently $H(u_Z)  = \Bbb{R}^n$ 
and $T_Z = T_Z' \bot \pm1_{e_{n + 1}}$  by Lemma \ref{Z}.  Now $Y \cap \{x \in S^n | <x, e_{n + 1}> > 0\} \not= \emptyset$ since $H(u_Y) \not= \Bbb{R}^{n}$. Thus $Z\cap \{x \in S^n | <x, e_{n + 1}> > 0\} \not= \emptyset$ since $Y \subseteq Z$. Therefore 
$T_Z = T_Z' \bot (-1_{e_{n + 1}})$. 
 \end{proof}

\begin{theorem}  \label{OTHER}
Let $X, Y \in \mathcal{L}_n$ with $X = T_X(\Phi_n^-) \cap \Phi_n^+$ and $Y = T_Y(\Phi_n^-) \cap \Phi_n^+$, where $T_X, T_Y \in O_{n + 1}$. Let $u_X = T_X(e_{n + 1})$ and let $u_Y = T_Y(e_{n + 1})$. 
Assume that
$H(u_X) \not= \Bbb{R}^{n}$ and $H(u_Y) \not= \Bbb{R}^{n}$. 

If $H(u_X) \cap \Bbb{R}^{n} \not= H(u_Y) \cap \Bbb{R}^{n}$
 there exists $Z \in \mathcal{L}_{n}$ such that $Z = X \vee Y$. 
\end{theorem}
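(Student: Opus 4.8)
The plan is to use Theorem \ref{conditions} to reduce the existence of $X \vee Y$ in $\mathcal{L}_n$ to the existence of a join in $\mathcal{L}_{n-1}$, which is available by induction on $n$. The point is that the standing hypotheses here ($H(u_X) \neq \Bbb{R}^n$, $H(u_Y) \neq \Bbb{R}^n$, and $H(u_X) \cap \Bbb{R}^n \neq H(u_Y) \cap \Bbb{R}^n$) are exactly those of Theorem \ref{conditions}. Hence every common upper bound $Z = T_Z(\Phi_n^-) \cap \Phi_n^+$ of $X$ and $Y$ satisfies $T_Z = T_Z' \bot (-1_{e_{n+1}})$ with $T_Z' \in O_n$; in particular $u_Z = -e_{n+1}$ and $H(u_Z) = \Bbb{R}^n$. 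Setting $U := \{x \in \Phi_n^+ \mid \langle x, e_{n+1}\rangle > 0\}$, I would show that the common upper bounds of $X$ and $Y$ in $\mathcal{L}_n$ are precisely the sets $U \cup Z''$, where $Z''$ ranges over the common upper bounds of $X \cap \Bbb{R}^n$ and $Y \cap \Bbb{R}^n$ in $\mathcal{L}_{n-1}$.

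First I would establish a decomposition of any such $Z$. Using the cone $D_Z$ of Definition \ref{mpccX} with ordered orthonormal basis $v_1 = T_Z'(e_1), \dots, v_n = T_Z'(e_n), v_{n+1} = -e_{n+1}$, a direct coordinate computation shows that every $x \in \Phi_n^+$ with $\langle x, e_{n+1}\rangle > 0$ has $v_{n+1}$-coefficient $\langle x, -e_{n+1}\rangle < 0$, and this is its top nonzero coefficient, so $x \in D_Z$; hence $U \subseteq Z$. Since every point of $\Phi_n^+$ either has positive last coordinate (and so lies in $U$) or lies in $\Bbb{R}^n$, this yields the disjoint decomposition $Z = U \cup (Z \cap \Bbb{R}^n)$, and Theorem \ref{projection} identifies $Z \cap \Bbb{R}^n$ as an element of $\mathcal{L}_{n-1}$, namely the one attached to $T_Z'$.

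Conversely, for any $Z'' \in \mathcal{L}_{n-1}$, written $Z'' = T''(\Phi_{n-1}^-) \cap \Phi_{n-1}^+$ with $T'' \in O_n$, I would set $T_Z := T'' \bot (-1_{e_{n+1}})$ and check, by the same cone computation, that $T_Z(\Phi_n^-) \cap \Phi_n^+ = U \cup Z''$, so $U \cup Z'' \in \mathcal{L}_n$. Matching of upper bounds is then bookkeeping: since $X \subseteq \Phi_n^+$ forces $X = (X \cap U) \cup (X \cap \Bbb{R}^n)$ with $X \cap U \subseteq U$ automatically, and $U \cap \Bbb{R}^n = \emptyset$, one gets $X \subseteq U \cup Z''$ if and only if $X \cap \Bbb{R}^n \subseteq Z''$, and likewise for $Y$. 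Combined with Theorem \ref{conditions}, this shows that $Z \mapsto Z \cap \Bbb{R}^n$ is an inclusion-preserving bijection from the common upper bounds of $\{X, Y\}$ in $\mathcal{L}_n$ onto the common upper bounds of $\{X \cap \Bbb{R}^n, Y \cap \Bbb{R}^n\}$ in $\mathcal{L}_{n-1}$.

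To finish, let $W = (X \cap \Bbb{R}^n) \vee (Y \cap \Bbb{R}^n)$, which exists in $\mathcal{L}_{n-1}$ by the inductive hypothesis, both arguments lying in $\mathcal{L}_{n-1}$ by Theorem \ref{projection}. Then $Z^* := U \cup W \in \mathcal{L}_n$ is a common upper bound of $X$ and $Y$, and for any common upper bound $Z = U \cup Z''$ of $X$ and $Y$ we have $W \subseteq Z''$, whence $Z^* \subseteq Z$; thus $Z^* = X \vee Y$. I expect the main obstacle to be the decomposition step: verifying cleanly, from the coordinate description of $D_Z$, both that the upper ``hemisphere'' $U$ is forced into every upper bound and that the reconstruction $U \cup Z''$ always lands back in $\mathcal{L}_n$. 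Once that is in place, the reduction to $\mathcal{L}_{n-1}$ and the inductive conclusion are routine.
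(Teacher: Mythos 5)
Your proposal is correct and follows essentially the same route as the paper's own proof: invoke Theorem \ref{conditions} to force every common upper bound to contain the open upper hemisphere $U$ and split as $U \cup (Z \cap \Bbb{R}^{n})$, use Theorem \ref{projection} and induction on $n$ to form $(X \cap \Bbb{R}^{n}) \vee (Y \cap \Bbb{R}^{n})$ in $\mathcal{L}_{n-1}$, and lift it by adjoining $U$ (the paper phrases this via the cone $D_{Z} = D_{Z_{1}} \cup \{x \mid \langle x, e_{n+1}\rangle > 0\}$, which is your reconstruction $U \cup Z''$ in cone form). Your extra step of packaging the argument as an inclusion-preserving bijection between the two sets of upper bounds is a tidier bookkeeping device but not a different method.
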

\begin{proof} Let $X_1 = X \cap \Bbb{R}^n$ and $Y_1 = Y  \cap \Bbb{R}^n$. Then
 by Theorem \ref{projection} we have 
 $X_1   \in \mathcal{L}_{n - 1} $ and $Y_1  \in \mathcal{L}_{n - 1}$.  We proceed by induction. Let $Z_1 \in \mathcal{L}_{n -1}$ where  $Z_1$ is the least upper bound of $\{X_1, Y_1\} $ in $
\mathcal{L}_{n -1}$.  Then  $Z_1 = D_{Z_1} \cap \Phi_{n - 1}^+$ for  a unique maximal pointed convex cone of $\Bbb{R}^n$. 

Let $D_Z = D_{Z_1} \cup \{x \in \Bbb{R}^{n + 1} | <x, e_{n + 1}> > 0\}$. It is not difficult to see that $D_Z$ is a maximal pointed convex cone in $\Bbb{R}^{n + 1}$ with $D_Z \cap \Bbb{R}^n = D_{Z_1}$.
We let $Z = D_Z \cap \Phi_n^+ \in \mathcal{L}_n$. Then 
$Z\cap S^{n - 1} = Z_1$ and therefore 
 $$X = X_1 \cup [X \cap \{x \in S^n | <x, e_{n + 1}> > 0\}] \subseteq Z$$
 and $$Y = Y_1 \cup [Y \cap \{x \in S^n | <x, e_{n + 1}> > 0\}] \subseteq Z.$$  

 Now, if $W \in \mathcal{L}_n$ with the property that both $X$ and $Y$ are subsets of $W$, then 
Theorem \ref{conditions} tells us that $\{x \in S^n | <x, e_{n + 1}> > 0\} \subseteq W$. Since $W\cap S^{n - 1}$ contains both $X_1$ and $Y_1$ and,
according to Theorem \ref{projection},
$W\cap S^{n - 1} \in \mathcal{L}_{n - 1}$,   we must have $Z_1 = Z\cap S^{n - 1} \subset W\cap S^{n - 1} \subseteq  W$. Therefore $Z \subseteq W$ and $Z$ is the least upper bound of $\{X, Y\}$ in $\mathcal{L}_n$. 
\end{proof}

\begin{theorem}\label{BAR}  Let $X, Y \in \mathcal{L}_n$ with $X = T_X(\Phi_n^-)\cap \Phi_n^+$ and $Y = T_Y(\Phi_n^-)\cap \Phi_n^+$. If $\bar{X} \subseteq \bar{Y}$, then there exists $Z \in \mathcal{L}_{n}$ such that $Z$ is the least upper bound of the set $\{X, Y\}$. 
\end{theorem}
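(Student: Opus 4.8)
The plan is to use $u_Y$ as the supporting normal of the desired join and to reduce the construction of $X\vee Y$ to a join one dimension down, taken inside the hyperplane $H(u_Y)$ rather than inside $\Bbb{R}^n$. Everything proceeds by induction on $n$ (the base case $n=0$, where $\mathcal{L}_0=\{\emptyset,\{e_1\}\}$ is a chain, being immediate), and I may assume as induction hypothesis that any two elements of $\mathcal{L}_{n-1}$ have a join. I would first dispose of the cases $u_Y\in\{\pm e_{n+1}\}$: if $u_Y=-e_{n+1}$ then $\bar Y$ is the whole upper hemisphere, $Y=\Phi_n^+$ is the maximum of $\mathcal{L}_n$, and $Z=Y$ works; if $u_Y=e_{n+1}$ then $\bar Y\subseteq S^{n-1}$ forces (via Lemma \ref{boundary}) $u_X=e_{n+1}$ as well, both $X,Y$ lie in $\Bbb{R}^n$, and the problem collapses through Theorem \ref{projection} to a join in $\mathcal{L}_{n-1}$. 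So I may assume $u:=u_Y\notin\{\pm e_{n+1}\}$.

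The structural point is that the hypothesis $\bar X\subseteq\bar Y$ forces $X$ into the half space $H_u$: by Lemma \ref{boundary}(2) one has $\bar Y=[H_u\cap H_e]\cap S^n\subseteq H_u$, so $X\subseteq\bar X\subseteq H_u$. Hence both $X$ and $Y$ split along the hyperplane $H(u)$ into a part lying in the open region $H_u^{int}\cap\Phi_n^+$ and a part lying on $H(u)\cap S^n$. Moreover $H_u^{int}\cap\Phi_n^+$ is exactly the open part $\{x\in\Phi_n^+\mid <x,u><0\}$ of $Y$, so it is contained in $Y$ and therefore in every upper bound of $\{X,Y\}$; the only freedom an upper bound has is on the equator $H(u)\cap S^n$.

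I would then move the problem onto $H(u)$. As in Theorem \ref{projection}, $D_X\cap H(u)$ and $D_Y\cap H(u)$ are maximal pointed convex cones of $H(u)$ --- here one needs $u_X\neq -u$, which holds because $\bar X\subseteq\bar Y$ makes the equatorial hemispheres $H_{u_X}\cap S^{n-1}$ and $H_u\cap S^{n-1}$ nest, hence (by Lemma \ref{hyper}) coincide, so the projections of $u_X$ and $u$ to $\Bbb{R}^n$ point the same way. Thus $X\cap H(u)$ and $Y\cap H(u)$ are elements of the weak order $\mathcal{L}^{(u)}$ attached to $H(u)$ with positive cone $D_e\cap H(u)$. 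Using Theorem \ref{equivalence} to pick the isometry $g\colon H(u)\to\Bbb{R}^n$ carrying the ordered orthonormal basis of $D_e\cap H(u)$ to $(e_1,\dots,e_n)$, I obtain an order isomorphism $\mathcal{L}^{(u)}\cong\mathcal{L}_{n-1}$, and the induction hypothesis produces the join $W:=(X\cap H(u))\vee(Y\cap H(u))$ in $\mathcal{L}^{(u)}$, with maximal pointed convex cone $D_W\subseteq H(u)$.

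Finally I would assemble and check the answer. Set $D_Z:=H_u^{int}\cup D_W$; appending $u$ as the last vector of the basis of $D_W$ exhibits $D_Z$ as a maximal pointed convex cone of $\Bbb{R}^{n+1}$ by Theorem \ref{equivalence}, so $Z:=D_Z\cap\Phi_n^+\in\mathcal{L}_n$. Because $D_Z\supseteq H_u^{int}$ and $Z\cap H(u)=D_W\cap\Phi_n^+=W$, the splitting gives $X,Y\subseteq Z$. For minimality, any upper bound $V$ contains $H_u^{int}\cap\Phi_n^+=Z\cap H_u^{int}$, while $V\cap H(u)\in\mathcal{L}^{(u)}$ contains both $X\cap H(u)$ and $Y\cap H(u)$ and hence their join $W=Z\cap H(u)$; since $Z\subseteq H_u$, this gives $Z\subseteq V$. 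The step I expect to be the main obstacle is the transfer in the third paragraph: establishing the analogue of Theorem \ref{projection} for the non-coordinate hyperplane $H(u)$, verifying that $\mathcal{L}^{(u)}\cong\mathcal{L}_{n-1}$ so the induction hypothesis genuinely applies, and confirming in the minimality step that every upper bound $V$ restricts to an element of $\mathcal{L}^{(u)}$ (which requires ruling out $u_V=-u$, impossible since $V\supseteq Y$ would force the wedge $H_u\cap H_e$ into $H_{-u}$).
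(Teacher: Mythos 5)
Your main construction is, in all essentials, the paper's own proof: the reduction along the hyperplane $H(u_Y)$, the transfer to $\mathcal{L}_{n-1}$ by an isometry supplied by Theorem \ref{equivalence}, the cone $D_Z = H_{u_Y}^{int}\cup D_W$, and the minimality argument obtained by intersecting the cone of an arbitrary upper bound with $H(u_Y)$. That part is sound, up to one sign: under the convention of Theorem \ref{equivalence} (where $c_m<0$), the basis attached to $D_e\cap H(u_Y)$ must be carried to $(-e_1,\dots,-e_n)$ rather than $(e_1,\dots,e_n)$, or else $\dddot{\Phi}$ lands on $\Phi_{n-1}^-$; the paper handles this by sending $e_i\mapsto -v_i$. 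The genuine error is your disposal of the case $u_Y=-e_{n+1}$: from ``$\bar{Y}$ is the whole upper hemisphere'' you conclude $Y=\Phi_n^+$, but elements of $\mathcal{L}_n$ are not determined by their closures, and $Y$ may miss part of the equator. Concretely, for $n=1$ take $T_Y = 1_{e_1}\bot(-1_{e_2})$, so $u_Y=-e_2$ and $Y = T_Y(\Phi_1^-)\cap\Phi_1^+ = \{x \in \Phi_1^+ | <x, e_2> > 0\} = \Phi_1^+\setminus\{e_1\}$, and take $T_X = -I_{\Bbb{R}^2}$, so $X=\Phi_1^+$. Then $\bar{X}=\bar{Y}$ is the closed upper semicircle, so the hypothesis $\bar{X}\subseteq\bar{Y}$ holds, yet $X\not\subseteq Y$: your proposed $Z=Y$ is not even an upper bound of $\{X,Y\}$ (here $X\vee Y = X$).

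The repair is to delete the special case rather than fix it: nothing in your third and fourth paragraphs actually requires $u_Y\neq -e_{n+1}$. The intersection of a maximal pointed convex cone with any linear hyperplane is a maximal pointed convex cone of that hyperplane, with no hypothesis whatsoever (this also makes your side-verifications that $u_X\neq -u$ and $u_V\neq -u$ unnecessary); the containment $\bar{Y}\subseteq H_{u_Y}$ holds for every $Y$ because $Y \subseteq D_{T_Y}\subseteq H_{u_Y}$, without appeal to Lemma \ref{boundary}(2); and the splitting $Y = (Y\cap H(u_Y))\cup(H_{u_Y}^{int}\cap\Phi_n^+)$ likewise holds for every $Y$ directly from the definition of $D_{T_Y}$. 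This is exactly how the paper proceeds: it runs your hyperplane reduction uniformly in $u_Y$, with no case distinctions at all. (Your treatment of the other special case $u_Y=e_{n+1}$ is correct as written, but it too can be absorbed into the main construction, where one then simply has $H_{u_Y}^{int}\cap\Phi_n^+=\emptyset$.)
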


\begin{proof} Let $X$ and $Y$ be as in the statement of the theorem with $\bar{X} \subseteq \bar{Y}$. Then 
$X = D_X \cap \Phi_n^+$ and $Y = D_X \cap \Phi_n^+$ where $D_X$ and $D_Y$ are maximal pointed convex cone in $\Bbb{R}^{n + 1}$. 
Suppose  $Z =  T_Z(\Phi_n^-)\cap \Phi_n^+  =  D_Z\cap \Phi_n^+ \in \mathcal{L}_n$ is  an upper bound for $\{X, Y\}$, where $T_Z \in O_{n + 1}$ and $D_Z$ is a maximal pointed convex cone in $\Bbb{R}^{n + 1}$. 
 Now  $Z = X \vee Y$ if and only if $X$ and $Y$ are contained in $Z$ and if $D_1$ is any  maximal pointed convex cone in 
$\Bbb{R}^{n + 1}$ for which $X$ and $Y$ are contained in $D_1 \cap \Phi^+_n$, then $Z = D_Z \cap  \Phi^+_n \subseteq D_1 \cap  \Phi^+_n$. 
We will use this characterization of $X\vee Y$ and  induction on $n$ to show that $Z = X\vee Y$ exists in $\mathcal{L}_n$. 
We have already noted that  we can find the join of any two elements in $\mathcal{L}_0$.

Let $u_Y = T_Y(e_{n + 1})$ and recall the definition of $D_e$ from Definition \ref{mpccX}. We first work on constructing that part of the boundary of $Z$ which meets $H(u_Y)$.
 Let 
 $$\dddot{\Phi} = \Phi_n^+ \cap H(u_Y) = D_e \cap H(u_y) \cap S^{n} = \dddot{D_e}\cap S^{n },$$
 where $\dddot{D_e} = D_e \cap H(u_y)$. We let 
 $$\dddot{X} = X \cap H(u_Y) = D_X \cap H(u_Y) \cap \Phi_n^+ = \dddot{D_X} \cap \Phi_n^+ =  \dddot{D_X} \cap \dddot{\Phi}$$
 where $\dddot{D_X} = D_X \cap H(u_Y)$ and  let 
  $$\dddot{Y} = Y \cap H(u_Y)
=D_Y \cap H(u_Y) \cap \Phi_n^+ =  \dddot{D_Y} \cap \Phi_n^+ =  \dddot{D_Y} \cap \dddot{\Phi}$$
 where $\dddot{D_Y} = D_Y \cap H(u_Y)$.
  Now $\dddot{D}_X$,  $\dddot{D}_Y $ and $\dddot{D_e}$ are  maximal pointed convex cones of $H(u_Y)$.
 Therefore  $T_Y^{-1}(\dddot{D_e})$     is a  maximal pointed convex cone in $\Bbb{R}^n$.  Now by Theorem \ref{equivalence}
 There exists an ordered  orthonormal basis $\{v_1, v_2, \dots , v_n \}$ of $\Bbb{R}^n$ with 
 $$T_Y^{-1}(\dddot{D_e}) = \{c_1v_1 + c_2v_2 + \dots + c_nv_n | c_m < 0 \ \mbox{where} \ m = \mbox{max}\{i | 
c_i \not= 0 \} \} \cup \{0\}.$$ Let $T_1$ be the orthogonal transformation in $O_n$ that sends $e_i$ to $-v_i$, $1 \leq i \leq n$. Then $$T_1^{-1}(T_Y^{-1}(\dddot{\Phi})) = T_1^{-1}(T_Y^{-1}(\dddot{D_e}\cap S^{n })) = T_1^{-1}(T_Y^{-1}(\dddot{D_e}))\cap T_1^{-1}(T_Y^{-1}(S^{n })) = \Phi^+_{n - 1}.$$
 Letting $T :H(u_Y) \to \Bbb{R}^n$ denote the isometry $T = T_1^{-1}T_Y^{-1} $, we get $T(\dddot{\Phi}) = \Phi^+_{n - 1}$ and 
$$
\widetilde{X} = T(\dddot{X})
= T(\dddot{D_X}) \cap T(\dddot{\Phi}) = 
T(\dddot{D_X}) \cap \Phi_{n - 1}^+ \in \mathcal{L}_{n -1}.
$$
since $T(\dddot{D_X})$ is a maximal pointed convex cone in $\Bbb{R}^n$. 
 Similarly $\widetilde{Y} =  T(\dddot{Y}) \in \mathcal{L}_{n -1}$. 
Using our inductive assumption, we can find  $\widetilde{Z} \in \mathcal{L}_{n - 1}$ with $\widetilde{Z} = \widetilde{X}\vee \widetilde{Y}$.  Now $\widetilde{Z} = D_{\widetilde{Z}} \cap \Phi_{n - 1}^+$ for some maximal pointed convex cone $D_{\widetilde{Z}}$ in $\Bbb{R}^n$ and if $D$ is another maximal pointed convex cone in $\Bbb{R}^n$ for which $ D \cap \Phi_{n - 1}^+$ is an upper bound for $\{\widetilde{X}, \widetilde{Y}\}$, then $ D_{\widetilde{Z}} \cap \Phi_{n - 1}^+ \subseteq D \cap \Phi_{n - 1}^+$.  The isometry $T^{-1} : \Bbb{R}^n \to H(u_Y)$ gives a bijection between maximal pointed convex cones in $\Bbb{R}^n$ and maximal pointed convex cones in 
$H(u_Y)$.  Therefore 
$$\dddot{Z} = T^{-1}(\widetilde{Z}) = T^{-1}(D_{\widetilde{Z}}) \cap T^{-1}(\Phi_{n - 1}^+) = T^{-1}(D_{\widetilde{Z}}) \cap \dddot{\Phi}$$
 contains $\dddot{X}$ and $\dddot{Y}$ and if $D_1$ is a maximal pointed convex cone in $H(u_Y)$ for which $D_1 \cap  \dddot{\Phi}$ contains $\dddot{X}$ and $\dddot{Y}$, then $\dddot{Z} \subseteq D_1 \cap  \dddot{\Phi}$. We let 
$\dddot{D_Z} = T^{-1}(D_{\widetilde{Z}})$,  a  maximal pointed convex cone in $H(u_Y)$ with $\dddot{D_Z}  \cap \dddot{\Phi} = \dddot{Z}$ . 

Consider $H_{u_Y}^{int} = \{x \in \Bbb{R}^{n + 1} | <x, u_Y> < 0 \}$. We have $Y = \dddot{Y} \cup (H_{u_Y}^{int} \cap \Phi_{n}^+$). Now 
$$X = \dddot{X} \cup \{x \in X | <x, u_Y> < 0\} = \dddot{X} \cup [X \cap (H_{u_Y}^{int} \cap \Phi_{n}^+)]$$
 since $\bar{X} \subseteq \bar{Y} \subseteq H_{u_Y}$. 
  Let $D_Z = \dddot{D_{Z}} \cup H_{u_Y}^{int}$. It is clear  that $D_Z$ is a maximal pointed convex cone in $\Bbb{R}^{n + 1}$ with 
  $D_Z \cap H(u_Y) = \dddot{D_Z}$. I claim that  $Z = D_Z \cap \Phi^+_n$ is the  least upper bound of $\{X, Y\}$ in $\mathcal{L}_n$. Certainly 
$$X\subseteq \dddot{X} \cup [  H^{int}_{u_Y} \cap \Phi^+_n] 
\subseteq \dddot{Z} \cup [  H^{int}_{u_Y} \cap \Phi^+_n]  \subseteq Z$$
 and similarly $Y \subseteq Z$. 

Now suppose $D'$ is another maximal pointed convex cone of $\Bbb{R}^{n + 1}$ with the property that $D' \cap \Phi_n^+$ contains both $X$ and $Y$. We must show that 
$Z \subseteq D' \cap \Phi_n^+$. We have $\dddot{X} \subseteq X \subseteq D' \cap \Phi_n^+$ and $\dddot{Y} \subseteq Y \subseteq D' \cap \Phi_n^+$. If we let $\dddot{D'} = D' \cap H(u_Y)$, then $\dddot{D'} $ is a maximal pointed cone of $H(u_Y)$ and $\dddot{D'} \cap \dddot{\Phi}$ contains both $\dddot{X}$ and $\dddot{Y}$. Therefore as shown above, $\dddot{Z} = \dddot{D_{Z}}  \cap \dddot{\Phi}   \subseteq \dddot{D'} \cap \dddot{\Phi}  \subseteq D' \cap \Phi_n^+$. Now since 
 $(H_{u_Y}^{int} \cap \Phi_{n}^+)  \subseteq Y \subseteq   D' \cap \Phi_n^+$, we have that $Z =  (\dddot{D_{Z}}  \cap\dddot{\Phi})
 \cup (H_{u_Y}^{int} \cap \Phi_{n}^+)  \subseteq D' \cap \Phi_n^+$. This completes that proof that $Z = X\vee Y$ in $\mathcal{L}_n$. 
\end{proof}

\begin{theorem}Let $X, Y \in \mathcal{L}_n$ with $X = T_X(\Phi_n^-)\cap \Phi_n^+$ and $Y = T_Y(\Phi_n^-)\cap \Phi_n^+$. Then 
there exists $Z \in \mathcal{L}_n$ with the property that $Z$ is a least upper bound for $\{X, Y\}$. 
\end{theorem}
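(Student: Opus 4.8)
The plan is to prove the statement by induction on $n$, the base case being the existence of joins in $\mathcal{L}_0$ noted at the start of this section. Throughout write $u_X=T_X(e_{n+1})$, $u_Y=T_Y(e_{n+1})$ and $e=-e_{n+1}$, as in Lemma \ref{boundary}. Two of the three cases are already packaged in the results above: if $\bar X\subseteq\bar Y$ or $\bar Y\subseteq\bar X$ then Theorem \ref{BAR} produces $X\vee Y$, while if $H(u_X)\neq\Bbb{R}^{n}$, $H(u_Y)\neq\Bbb{R}^{n}$ and $H(u_X)\cap\Bbb{R}^{n}\neq H(u_Y)\cap\Bbb{R}^{n}$ then Theorem \ref{OTHER} produces $X\vee Y$. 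Since the latter hypothesis is equivalent to $u_X,u_Y,e$ being linearly independent, the entire remaining task is to construct the join when $\bar X$ and $\bar Y$ are incomparable and $u_X,u_Y,e$ are linearly dependent.

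The tool for this leftover case is a cone criterion for containment of closures. For $u_X\neq e_{n+1}$ Lemma \ref{boundary} gives $\bar X=H_{u_X}\cap H_{e}\cap S^{n}$, and together with Lemma \ref{closure} this shows that for any $W=T_W(\Phi_n^-)\cap\Phi_n^+$ with $u_W=T_W(e_{n+1})\neq e_{n+1}$ one has $\bar X\subseteq\bar W$ if and only if $u_W\in\Bbb{R}_{\geq 0}u_X+\Bbb{R}_{\geq 0}e$, the convex cone spanned by $u_X$ and $e$; similarly for $Y$. Two reductions trim the leftover case. First, if $u_X=-e_{n+1}$ then $\bar X=\bar{\Phi_n^+}$ contains every $\bar Y$, so the closures are comparable and Theorem \ref{BAR} applies (symmetrically for $u_Y=-e_{n+1}$); hence in the leftover case $u_X,u_Y\neq -e_{n+1}$. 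Second, if $u_X=u_Y=e_{n+1}$ then $X,Y\subseteq S^{n-1}$ are elements of $\mathcal{L}_{n-1}$, their join $Z_1$ exists there by induction, and $Z_1$ is again their join in $\mathcal{L}_n$ because any $W\in\mathcal{L}_n$ containing $X$ and $Y$ contains $W\cap S^{n-1}\in\mathcal{L}_{n-1}$ by Theorem \ref{projection}.

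There remains the case in which $u_X,u_Y,e$ span a common $2$-plane $P$, the closures $\bar X,\bar Y$ are incomparable, and at most one of $u_X,u_Y$ equals $e_{n+1}$. The crux is the claim that every upper bound $W$ of $\{X,Y\}$ then has $u_W=e$, equivalently, by Lemma \ref{Z}, $T_W=T_W'\bot(-1_{e_{n+1}})$ and $\{x\in\Phi_n^+\mid <x, e_{n+1}> > 0\}\subseteq W$. If $u_X,u_Y\neq\pm e_{n+1}$, the criterion forces $u_W\in(\Bbb{R}_{\geq 0}u_X+\Bbb{R}_{\geq 0}e)\cap(\Bbb{R}_{\geq 0}u_Y+\Bbb{R}_{\geq 0}e)$; inside $P$, were $u_X$ and $u_Y$ on the same side of the line $\Bbb{R}e_{n+1}$ these two cones would be nested and $\bar X,\bar Y$ comparable, so they lie on opposite sides, the intersection collapses to the ray $\Bbb{R}_{\geq 0}e$, and $u_W=e$. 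If instead $u_X=e_{n+1}$ (so $\bar X\subseteq S^{n-1}$ and $u_Y\neq\pm e_{n+1}$), then $\bar Y\subseteq\bar W$ forces the $\Bbb{R}^{n}$-component of $u_W$ to be a nonnegative multiple of that of $u_Y$; a positive multiple would give $H_{u_W}\cap S^{n-1}=H_{u_Y}\cap S^{n-1}$ and hence $\bar X\subseteq\bar Y$, against incomparability, so the component vanishes and again $u_W=e$.

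Granting the claim, I would finish exactly as in the proof of Theorem \ref{OTHER}. Put $X_1=X\cap\Bbb{R}^{n}$ and $Y_1=Y\cap\Bbb{R}^{n}$, which lie in $\mathcal{L}_{n-1}$ by Theorem \ref{projection}; let $Z_1=X_1\vee Y_1$ by the inductive hypothesis; and set $Z=(D_{Z_1}\cup\{x\in\Bbb{R}^{n+1}\mid <x, e_{n+1}> > 0\})\cap\Phi_n^+$, so that $Z=Z_1\cup\{x\in\Phi_n^+\mid <x, e_{n+1}> > 0\}$. Then $X,Y\subseteq Z$, and for any upper bound $W$ the claim gives $\{x\in\Phi_n^+\mid <x, e_{n+1}> > 0\}\subseteq W$ while $Z_1\subseteq W\cap S^{n-1}$ by the definition of $Z_1$ and Theorem \ref{projection}, so $Z\subseteq W$ and $Z=X\vee Y$. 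I expect the forcing claim of the previous paragraph to be the main obstacle: the appeals to Theorems \ref{BAR}, \ref{OTHER} and the check that $Z$ is least are routine, but showing that incomparable closures in the linearly dependent case admit no upper bound other than the ``full upper cap'' type rests on the two-dimensional cone computation in $P$ and its projection variant when one of $u_X,u_Y$ equals $e_{n+1}$.
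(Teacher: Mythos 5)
Your proposal is correct, and its skeleton is the same as the paper's: dispose of comparable closures via Theorem \ref{BAR}, of the case $H(u_X)\cap\Bbb{R}^n\neq H(u_Y)\cap\Bbb{R}^n$ (with neither hyperplane equal to $\Bbb{R}^n$) via Theorem \ref{OTHER} --- your observation that this hypothesis is equivalent to linear independence of $u_X,u_Y,e$ is right --- and, in the leftover case, show that every upper bound $W$ must have $T_W=T_W'\bot(-1_{e_{n+1}})$, then build the join as $Z_1\cup\{x\in \Phi_n^+ \mid <x,e_{n+1}>\ >0\}$ with $Z_1$ the join of $X\cap\Bbb{R}^n$ and $Y\cap\Bbb{R}^n$ in $\mathcal{L}_{n-1}$; that final construction and the leastness check are exactly what the paper does (its Case 1 and the last part of its Case 2). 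Where you genuinely differ is the engine for the forcing claim. The paper argues with hyperplane traces: in Case 1 it produces, from incomparability, a point of $(H_{u_X}\cap\Bbb{R}^n)\setminus(H_{u_Y}\cap\Bbb{R}^n)$ by a segment-crossing argument, deduces $\pi(u_X)=c\pi(u_Y)$ with $c<0$, hence $(\bar X\cap\Bbb{R}^n)\cup(\bar Y\cap\Bbb{R}^n)=S^{n-1}$, and then invokes Lemma \ref{Z}; its Case 2 runs a parallel argument through Lemma \ref{equality}. You instead encode $\bar X\subseteq\bar W$ by the dual-cone criterion $u_W\in\Bbb{R}_{\geq0}u_X+\Bbb{R}_{\geq0}e$ and finish with a two-dimensional cone computation in the plane $P$. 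This is cleaner and treats the paper's two cases almost uniformly, at the cost of importing a piece of convex duality the paper never proves.

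That imported piece is the one real debt in your write-up: the ``only if'' direction of your criterion is a (two-generator) Farkas-type statement, and it does not follow from Lemmas \ref{boundary} and \ref{closure} alone as you assert. You need to lift $\bar X\subseteq\bar W$ to the cone containment $H_{u_X}\cap H_e\subseteq H_{u_W}$ using Lemma \ref{hatcone}, and then prove the duality: the subspace $H(u_X)\cap H(e)$ lies in the half space $H_{u_W}$, hence in $H(u_W)$, forcing $u_W\in\mathrm{span}(u_X,e)$; writing $u_W=au_X+be$ and pairing against vectors $v\in H_{u_X}\cap H_e$ with $<v,u_X>=0$, $<v,e>=-1$ (and symmetrically) gives $a,b\geq0$. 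Two smaller holes: before applying the criterion to an upper bound $W$ you must rule out $u_W=e_{n+1}$ (easy: then $W\subseteq S^{n-1}$, but in your remaining cases $X$ or $Y$ contains points with positive last coordinate, by Lemma \ref{boundary}(1) and Lemma \ref{halfspaceintersect}); and in the reduction where $u_X=u_Y=e_{n+1}$ you should note that $Z_1$ is an element of $\mathcal{L}_n$, via $T_{Z_1}'\bot 1_{e_{n+1}}$, before comparing it with upper bounds in $\mathcal{L}_n$, as the paper does in its Case 2. All of these are fillable in a few lines, so your proof goes through.
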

\begin{proof} Let $u_X = T_X(e_{n + 1})$ and $u_Y = T_Y(e_{n + 1})$. If the conditions of theorem \ref{OTHER} or theorem 
 \ref{BAR} are met then there is a least upper bound for $\{X, Y\}$ in $\mathcal{L}_n$. 

We have two remaining cases to consider. First we will consider the case where neither $T_X$ nor $T_Y$ fix $\Bbb{R}^n$ and $H(u_X) \cap \Bbb{R}^{n} = H(u_Y) \cap \Bbb{R}^{n}$. Secondly we will consider the case where either $T_X$ or $T_Y$ fixes $\Bbb{R}^n$ but we have neither $\bar{X} \subseteq \bar{Y}$ nor 
$\bar{Y} \subseteq \bar{X}$. 
\vskip .1in
\noindent
{\bf Case 1.} Let us assume that $H(u_X) \cap \Bbb{R}^{n} = H(u_Y) \cap \Bbb{R}^{n}$, $H(u_X) \cap \Bbb{R}^{n} \not= \Bbb{R}^{n}$  and $H(u_Y) \cap \Bbb{R}^{n} \not= \Bbb{R}^{n}$. 
If $\bar{X} \subseteq \bar{Y}$ or $\bar{Y} \subseteq \bar{X}$, then the case has been covered in Theorem \ref{BAR}. Therefore we can also assume that $\bar{X} \not\subseteq \bar{Y}$ and $\bar{Y} \not\subseteq \bar{X}$. 

We first show that $(\bar{X} \cap \Bbb{R}^n) \cup (\bar{Y} \cap \Bbb{R}^n)  = S^{n - 1}$. 
By Lemma  \ref{boundary}, $\bar{X} = H_{u_X} \cap H_{-e_{n + 1}}\cap S^n$ and $\bar{Y} = H_{u_Y} \cap H_{-e_{n + 1}}\cap S^n$. Since $\bar{X} \not\subseteq \bar{Y}$ and $\bar{Y} \not\subseteq \bar{X}$, we must have 
an $x \in \bar{X}$ with $<x, u_Y>  > 0$ and a $y \in \bar{Y}$ with $<y, u_X>  > 0$. Now  $y \in H_{-u_X}^{int}$ which is an open set,
and $H_{u_Y}^{int} \cap H_{-e_{n + 1}}^{int} \not= \emptyset$. Therefore 
we can assume that $<y, u_Y> < 0$ since 
 $ \bar{Y} = H_{u_Y} \cap H_{-e_{n + 1}}\cap S^n  = \overline{H_{u_Y}^{int} \cap H_{-e_{n + 1}}^{int} \cap S^n}$. 
Thus we have 
$$<-y, u_X>  < 0,  \ \ <-y, u_Y>  > 0\ \mbox{ and} \ <-y, e_{n + 1}>  \leq 0,$$
 and  both $x$ and $-y$ are in the convex cone $ H_{u_X} \cap H_{-u_Y}$. Then we have $\lambda x + (1 - \lambda)(-y) \in 
H_{u_X} \cap H_{-u_Y}$ for all values of $\lambda$ between $0$ and $1$. Since 
$<x, e_{n + 1}> \geq 0$ and $<-y, e_{n + 1}> \leq  0$, we must have $<\lambda x + (1 - \lambda)(-y), e_{n + 1}> = 0$
for some $0 \leq \lambda \leq 1$. For such a $\lambda$, let 
$x_1 = \lambda x + (1 - \lambda)(-y) \in H_{u_X} \cap \Bbb{R}^n$. Since  
$$<x_1, u_Y> = \lambda<x, u_Y> + (1 - \lambda)<-y, u_Y>  > 0,$$
 we have $x_1 \not\in H_{u_Y} \cap \Bbb{R}^n$.  
By assumption, we have $H = H(u_X) \cap \Bbb{R}^{n} = H(u_Y) \cap \Bbb{R}^{n}$ and since $H(u_X) \not= \Bbb{R}^{n}$ and 
$H(u_Y) \not= \Bbb{R}^{n}$, H must be a hyperplane in $\Bbb{R}^{n}$. In fact it is easy to see that $H = H^{\Bbb{R}^n}(\pi(u_X)) = H^{\Bbb{R}^n}(\pi(u_Y))$, where $\pi$ denotes orthogonal projection onto $\Bbb{R}^n$. 
Also by lemma \ref{hyper}, we have $\pi(u_X) = c\pi(u_Y)$ for some $c \in \Bbb{R}$.  Since $x_1 \in 
H^{\Bbb{R}^n}_{\pi(u_X)} = H_{u_X} \cap \Bbb{R}^n$ and $x_1 \not\in H^{\Bbb{R}^n}_{\pi(u_Y)} =  H_{u_Y} \cap \Bbb{R}^n$, we  must have $H^{\Bbb{R}^n}_{\pi(u_X)} \not= H^{\Bbb{R}^n}_{\pi(u_Y)}$ and $\pi(u_X) = c\pi(u_Y)$ for some $c < 0$. Therefore
$H^{\Bbb{R}^n}_{\pi(u_X)} \cup H^{\Bbb{R}^n}_{\pi(u_Y)} = \Bbb{R}^n$. 
By Lemma \ref{boundary}, we have $\bar{X} \cap \Bbb{R}^n = H_{u_X} \cap S^{n-1}$ and $\bar{Y} \cap \Bbb{R}^n = H_{u_Y} \cap S^{n-1}$, therefore $(\bar{X} \cap \Bbb{R}^n) \cup (\bar{Y} \cap \Bbb{R}^n)  = S^{n - 1}$.

Let  $Z \in \mathcal{L}_n$ with $Z = T_Z(\Phi_n^-)\cap \Phi^+_n$   which contains both $X$ and $Y$. Let 
$u_Z = T_Z(e_{n + 1})$.  Then $\bar{Z}$ contains both $\bar{X}$ and $\bar{Y}$. 
Since $\bar{Z} = H_{u_Z} \cap H_{-e_{n + 1}}$ and $S^{n - 1} = (\bar{X} \cap \Bbb{R}^n) \cup (\bar{Y} \cap \Bbb{R}^n) \subseteq \bar{Z}$, we  have $H({u_Z)} = \Bbb{R}^n$. Therefore by Lemma \ref{Z}, we have 
$T_Z = T'_Z \bot \pm 1_{e_{n + 1}}$ for some $T'_Z \in O_n$.
 Since we have assumed that $H(u_Y) \cap \Bbb{R}^{n} \not= \Bbb{R}^{n}$, we have $y \in Y \subseteq Z$ with $<y, e_{n + 1}> > 0$. Therefore we can assume that 
$T_Z = T'_Z \bot ( -1_{e_{n + 1}})$. 

Now let $Z_1 \in \mathcal{L}_{n -1}$ be a least upper bound for $X_1 = X\cap \Bbb{R}^n$ and $Y_1 = Y\cap \Bbb{R}^n$. We have $Z_1 = D_{Z_1}\cap \Phi_{n-1}^+$, where $D_{Z_1}$ is a maximal pointed convex cone of 
$\Bbb{R}^n$. Let $D = D_{Z_1} \cup \{x \in S^n | <x, e_{n +1}> > 0\}$. It is not difficult to see that $D$ is a maximal pointed convex cone in $\Bbb{R}^{n + 1}$ and that $D\cap \Bbb{R}^n = D_{Z_1}$. Therefore 
$Z_2 = D \cap \Phi_n^+$ is an element of $\mathcal{L}_n$ with $Z_2 \cap \Bbb{R}^n = Z_1$. 
Since $Z_1$ contains both $X_1$ and $Y_1$ and $Z_2$ contains $Z_1$ and $ \{x \in S^n | <x, e_{n +1}> > 0\}$, we must have that $Z_2$ contains both $X$ and $Y$. Hence $Z_2$ is an upper bound for $\{X, Y\}$ in $\mathcal{L}_n$. I claim that it is the least upper bound of $\{X, Y\}$ in $\mathcal{L}_n$.

If $Z \in \mathcal{L}_n$ with $Z = T_Z(\Phi_n^-)\cap \Phi^+_n$ such that  $Z$ contains both $X$ and $Y$,  we have, from above,  that $T_Z = T'_Z \bot ( -1_{e_{n + 1}})$ for some $T'_Z \in O_n$. Now $Z \cap S^{n -1} = T'_Z(\Phi_{n - 1}^-) \cap \Phi_{n-1}^+$ is an element of $\mathcal{L}_{n - 1}$ which contains both $X_1$ and $Y_1$. Therefore 
$Z_1 \subseteq  Z \cap S^{n -1}$. Since $T_Z = T'_Z \bot ( -1_{e_{n + 1}})$, we have $\{x \in S^n | <x, e_{n +1}> > 0\} \subseteq Z \cap S^{n -1}$. Therefore, we have $Z_2 = Z_1 \cup \{x \in S^n | <x, e_{n +1}> > 0\} \subseteq Z$ and 
$Z_2 = X \vee Y$. 

\vskip .1in
\noindent
{\bf Case 2} We  assume that $T_X(\Bbb{R}^n) = \Bbb{R}^n$,  $\Bar{Y} \not \subseteq \bar{X}$ and $\Bar{X} \not \subseteq \bar{Y}$. 
By Lemma \ref{Z}, we have $T_X = T'_X \bot \pm 1_{e_{n + 1}}$ for some $T'_X \in O_n$. 
If $T_X = T'_X \bot ( -1_{e_{n + 1}})$, we have $\bar{X} = H_{-e_{n + 1}} \cap S^n$ and $\bar{Y} \subseteq \bar{X}$.
Therefore we must have $T_X = T'_X \bot ( 1_{e_{n + 1}})$ and $X \subset S^{n - 1}$.  If $T_Y(\Bbb{R}^n) = \Bbb{R}^n$, then 
by Lemma \ref{Z}, we have $T_Y = T'_Y \bot \pm 1_{e_{n + 1}}$ for some $T'_Y \in O_n$ in which case, either 
$\bar{X} \subseteq \bar{Y}$ or  $T_Y = T'_Y \bot  (1_{e_{n + 1}})$. In the latter case $X \subset S^{n - 1}$ and 
$Y \subset S^{n - 1}$. Therefore both $X$ and $Y$ can be viewed as elements of $\mathcal{L}_{n-1}$ and 
by induction there exists $Z \in \mathcal{L}_{n-1}$ such that $Z$ is a least upper bound for $\{X, Y\}$ in $\mathcal{L}_{n - 1}$.  We have  $Z = T'_Z(\Phi_{n - 1}^-) \cap \Phi_{n - 1}^+$ for some $T'_Z \in O_n$. Letting $T_Z = 
T'_Z \bot 1_{e_{n + 1}} \in O_{n + 1}$, we see that $Z = T_Z(\Phi_n^-) \cap \Phi_n^+$ is  a least upper bound for $X$ and $Y$ in $\mathcal{L}_n$. 

We are left with the case where $T_X = T'_X \bot ( 1_{e_{n + 1}})$,  $X \subset S^{n - 1}$,  $T_Y(\Bbb{R}^n)\not= \Bbb{R}^n$,
$\Bar{Y} \not \subseteq \bar{X}$ and $\Bar{X} \not \subseteq \bar{Y}$. 
  Since  $\bar{X} = \bar{X} \cap \Bbb{R}^n \not\subseteq \bar{Y}  \cap \Bbb{R}^n = H_{u_Y} \cap \Bbb{R}^n$,  we have $x \in \bar{X}$ with $<x, u_Y> > 0$.

 Now let $Z = T_Z(\Phi_{n}^- )\cap \Phi_{n}^+ \in \mathcal{L}_n$ which contains both $X$ and $Y$. Let us assume that 
 $T_Z(\Bbb{R}^n) \not= \Bbb{R}^n$.  
We must have $\bar{Z}$ contains both $\bar{X}$ and $\bar{Y}$. We have  
{\small $${ \bar{Y} \cap S^{n - 1} = \{y \in S^{n -1} | 
<y, u_Y> \leq 0\} \subseteq \bar{Z} \cap S^{n - 1} = \{y \in S^{n -1} | 
<y, T_Z(e_{n + 1})> \leq 0\}.}$$}
 As in the proof of Theorem  \ref{conditions} 
we must have $H(u_y) \cap S^{n - 1} \subseteq H(T_Z(e_{n + 1})) \cap S^{n - 1}$ and hence by 
Lemma \ref{equality} we have equality. From the inclusion above we see that the half spaces 
$H_{u_Y} \cap \Bbb{R}^n$ and $H_{T_Z(e_{n + 1})} \cap \Bbb{R}^n$ are equal.
 However since we must have $\bar{X} \subseteq \bar{Z}$ and we have 
$x \in \bar{X}$ with $<x, u_Y> > 0$, we get  a contradiction. Therefore we must have $T_Z(\Bbb{R}^n) = 
\Bbb{R}^n$ and $T_Z = T'_Z \bot -1_{e_{n + 1}}$, since $Y \subseteq Z$ and we have $Y\cap \{x \in S^n| <x, e_{n + 1}> > 0 \} \not= \emptyset$. 

Let $Z_1 $ be an element of  $ \mathcal{L}_{n - 1}$ which is a least upper bound 
for $X = X \cap S^{n - 1}$ and $Y_1 = Y\cap S^{n - 1}$. Then $Z_1  = D_{Z_1} \cap \Phi_{n-1}^+$  where $D_{Z_1}$ is a maximal pointed convex cone 
of $\Bbb{R}^n$. Let $D = D_{Z_1} \cup \{x \in S^n | <x, e_{n +1}> > 0\}$ and let $Z_2 = D \cap \Phi_n^+ \in \mathcal{L}_n$. Then 
it is not difficult to see that  both $X$ and $Y$ are contained in $Z_2$. 

On the other hand if $Z = T_Z(\Phi_{n}^-) \cap \Phi_{n}^+$ is an element of  $\mathcal{L}_n$ which contains both $X$ and $Y$, from above  we have $T_Z = T'_Z \bot -1_{e_{n + 1}}$ and 
$\{x \in S^n | <x, e_{n +1}> > 0\} \subset Z$. Since $Z\cap S^{n - 1} \in \mathcal{L}_{n - 1}$ which 
contains $X$ and $Y_1$, we must have $Z_1 \subseteq Z$. Therefore $Z_2 = Z_1 \cup \{x \in S^n | <x, e_{n +1}> > 0\} \subset Z$ and $Z_2 = X\vee Y$ in $\mathcal{L}_n$. This finishes the proof. 
\end{proof}

\section{$\mathcal{L}_{n }$ is a complete lattice }
In this section, we show that $\mathcal{L}_{n }$ is a complete lattice, that is given any subset $S$ of $\mathcal{L}_{n }$,
$\bigwedge S$ and $\bigvee S$ exist in $\mathcal{L}_{n }$.
If $S$ is finite, we already have that $\bigvee S$ exists in $\mathcal{L}_{n }$, by induction and $\bigwedge S$ exists by 
duality. 
 The full result  will follow from exercise 7.5 in \cite{DnP} when we have shown that the result is true for directed sets. 

\begin{lemma} {( Exercise 7.5 \cite{DnP})} Let $P$ be an ordered set such that  the join of any two elements in $P$ exists in $P$. Let $S \subseteq P$  Let $D$ be the directed set  $D = \{\bigvee F |
\emptyset \not= F \subseteq S, F \  \mbox{finite} \}$.  Then $\bigvee S = \bigvee D$ if $\bigvee D$ exists. 
\end{lemma}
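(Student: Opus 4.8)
The plan is to show that $S$ and $D$ have precisely the same set of upper bounds in $P$; the claimed equality then falls out immediately. First I would observe that $D$ is genuinely well-defined and directed. The hypothesis that the join of any two elements exists implies, by an easy induction on $|F|$, that $\bigvee F$ exists for every finite non-empty $F \subseteq S$, so $D$ makes sense as a subset of $P$. Moreover, given any two elements $\bigvee F_1$ and $\bigvee F_2$ of $D$, the set $F_1 \cup F_2$ is again a finite non-empty subset of $S$, so $\bigvee (F_1 \cup F_2) \in D$, and this element is an upper bound for both $\bigvee F_1$ and $\bigvee F_2$; hence $D$ is directed.

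The key step is to prove that the upper bounds of $S$ coincide with the upper bounds of $D$. For one inclusion, note that each singleton $\{s\}$ with $s \in S$ is a finite non-empty subset of $S$ with $\bigvee \{s\} = s$, so $S \subseteq D$; consequently every upper bound of $D$ is automatically an upper bound of $S$. For the reverse inclusion, suppose $u$ is an upper bound of $S$ and let $\bigvee F$ be an arbitrary element of $D$, where $F \subseteq S$ is finite and non-empty. Since $F \subseteq S$, the element $u$ dominates every member of $F$, so $u$ is an upper bound for the set $F$, and the defining property of the least upper bound yields $\bigvee F \leq u$. As $\bigvee F$ was arbitrary, $u$ is an upper bound of $D$. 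Thus $S$ and $D$ have exactly the same upper bounds.

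Finally, since $S$ and $D$ share the same set of upper bounds, they share the same least upper bound whenever either one exists. In particular, if $\bigvee D$ exists then it is the least element of this common set of upper bounds, which is by definition also the least upper bound of $S$; hence $\bigvee S = \bigvee D$. I expect no serious obstacle in this argument: the only point requiring any care is the implication from ``$u$ bounds each element of $F$'' to ``$\bigvee F \leq u$'', which is exactly the universal property of the join, together with the preliminary remark that the existence of binary joins forces all finite non-empty joins to exist, so that $D$ is defined in the first place.
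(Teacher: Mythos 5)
Your proof is correct and takes essentially the same route as the paper: the paper's two steps (every upper bound of $S$ dominates each $\bigvee F$ by the universal property of the join, and $s = s \vee s \in D$ so that $\bigvee D$ is an upper bound for $S$) are exactly your two inclusions of upper-bound sets, merely repackaged as the statement that $S$ and $D$ have the same upper bounds. Your preliminary checks that finite non-empty joins exist and that $D$ is directed, and your cleaner handling of the existence of $\bigvee S$, are minor refinements of detail rather than a different argument.
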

\begin{proof} If $U$ is an upper bound for $S,$ then $U$ is an upper bound for any finite subset $F$ of $S$
and hence  $\bigvee F \leq  U$. Therefore $D_1 \leq U$ for all $D_1 \in D$ and  $\bigvee D \leq  \bigvee S$
if they exist. 

On the other hand, given any $s \in S$, we have $s = s \vee s \in D$ and $s \leq \bigvee D$ if $\bigvee D$ exists. 
Therefore $\bigvee D$ is an upper bound for $S$ and $\bigvee S \leq \bigvee D$. 
\end{proof}

In order to show that $\mathcal{L}_{n }$ is a complete lattice, it remains to show that any directed set in $\mathcal{L}_{n }$
has a least upper bound. It will then follow from the above Lemma  that every subset $S$ of $\mathcal{L}_{n }$ has a least upper bound and the existence of a greatest lower bound for any set follows from duality (Lemma \ref{duality})

 Recall the definition of $\widehat{X}$ for $X \in \mathcal{L}_n$ from Definition \ref{hatconedef} and the definitions of $D_X$ and $D_e$ from 
 Definition \ref{mpccX}.
 
\begin{lemma} Let $\{X_i\}_{i \in I}$ be a directed set in $\mathcal{L}_{n}$. Let $C_X = \bigcup \widehat{X}_i$. Then 
$C_X$ is a convex cone with  $X = C_X \cap \Phi_n^+ \in \mathcal{L}_{n}$ and   $X = \bigvee \{X_i\}_{i \in I}$. 
\end{lemma}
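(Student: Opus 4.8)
The plan is to show that $C_X := \bigcup_{i \in I} \widehat{X}_i$ is a maximal pointed convex cone, from which $X = C_X \cap \Phi_n^+ \in \mathcal{L}_n$ follows by the characterization of $\mathcal{L}_n$ given after Definition \ref{mpccX}, and then to verify directly that this $X$ is the least upper bound. First I would establish that $C_X$ is a convex cone. Each $\widehat{X}_i$ is a convex cone (in fact $\widehat{X}_i = D_{X_i} \cap D_e$ intersected with rays, or more precisely a convex cone by Lemma \ref{hatcone} since $X_i = \widehat{X}_i \cap S^n$). The union of an arbitrary family of convex cones need not be convex, so the crucial input is directedness: given $u \in \widehat{X}_i$ and $v \in \widehat{X}_j$, choose $k \in I$ with $X_i, X_j \subseteq X_k$, whence $\widehat{X}_i, \widehat{X}_j \subseteq \widehat{X}_k$ (inclusion of subsets of $S^n$ passes to their cones), so $u, v \in \widehat{X}_k$ and $u + v \in \widehat{X}_k \subseteq C_X$. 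Closure under nonnegative scaling and containing $0$ are immediate from each $\widehat{X}_i$ being a cone. This shows $C_X$ is a convex cone.

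Next I would show $C_X$ is pointed, i.e. $C_X \cap (-C_X) = \{0\}$. Suppose $0 \neq w \in C_X \cap (-C_X)$, so $w \in \widehat{X}_i$ and $-w \in \widehat{X}_j$ for some $i,j$; by directedness both lie in some $\widehat{X}_k$, contradicting that $\widehat{X}_k$ (coming from the pointed cone $D_{X_k} \cap D_e$, or directly the cone over $X_k \subseteq \Phi_n^+$) is pointed — indeed $X_k \subseteq \Phi_n^+$ and $\Phi_n^+ \cap \Phi_n^- = \emptyset$ forces $\widehat{X}_k \cap (-\widehat{X}_k) = \{0\}$. The genuinely delicate point, and what I expect to be the main obstacle, is \emph{maximality}: I must produce a \emph{maximal} pointed convex cone $D$ with $D \cap \Phi_n^+ = C_X \cap \Phi_n^+$, since $C_X$ itself will typically not satisfy $C_X \cup (-C_X) = \mathbb{R}^{n+1}$. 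The right approach is to mimic Lemma \ref{mpcc}: set $C_1 = C_X \cap D_e$ (a pointed convex cone with $C_1 \cap S^n = X$) and define $D := C_1 \cup (-(D_e \setminus C_1 \cup \{0\}))$, i.e. glue $C_1$ inside $D_e$ with the complementary piece inside $-D_e$. One then checks, exactly as in Lemma \ref{mpcc}, that $D \cup (-D) = D_e \cup (-D_e) = \mathbb{R}^{n+1}$ and $D \cap (-D) = \{0\}$, and that $D$ is convex. Convexity of $D$ is the subtle step and is where directedness is used a second time, to guarantee $C_1$ is a convex cone whose boundary behaves well under the gluing.

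Finally I would verify $X = \bigvee_{i} X_i$. That $X$ is an upper bound is clear: for each $i$, $\widehat{X}_i \subseteq C_X$ so $X_i = \widehat{X}_i \cap \Phi_n^+ \subseteq C_X \cap \Phi_n^+ = X$. For minimality, let $W \in \mathcal{L}_n$ be any upper bound, with $W = D_W \cap \Phi_n^+$ for a maximal pointed convex cone $D_W$. Then $X_i \subseteq W$ for all $i$ gives $\widehat{X}_i \subseteq \widehat{W} \subseteq D_W$ for every $i$ (using $\widehat{X}_i \subseteq \widehat{W}$ from $X_i \subseteq W$ via Lemma \ref{hatcone}, and $\widehat{W} \subseteq D_W$ since $D_W$ is a cone containing $W$), hence $C_X = \bigcup_i \widehat{X}_i \subseteq D_W$. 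Intersecting with $\Phi_n^+$ yields $X = C_X \cap \Phi_n^+ \subseteq D_W \cap \Phi_n^+ = W$, so $X \subseteq W$. Thus $X$ is the least upper bound, completing the proof.
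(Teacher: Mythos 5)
Your construction is the same as the paper's --- form $C_X=\bigcup_i\widehat{X}_i$, use directedness to get convexity of $C_X$, then glue with the complementary piece to get $D=C_X\cup(-C_Y)$ where $C_Y=(D_e\setminus C_X)\cup\{0\}$ --- and your final upper-bound/minimality verification is correct (indeed more detailed than the paper's). But the step you yourself flag as ``the subtle step'' contains a genuine gap, and neither of the two justifications you offer closes it. First, Lemma \ref{mpcc} cannot be imitated here: that lemma \emph{assumes} a maximal pointed convex cone $D_X$ with $D_X\cap\Phi_n^+=X$ already exists and proves uniqueness; the convexity of the glued set there is inherited from the given $D_X$, never proved, so there is no convexity ``check'' in that proof to repeat. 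Second, directedness cannot be ``used a second time'': it gives convexity of $C_X$ and nothing about the complementary piece. And convexity of $C_X$ alone is not enough, because for a general convex subcone $C_1\subseteq D_e$ the set $(D_e\setminus C_1)\cup\{0\}$ need not be convex, and then neither is the glued set. Concretely, in $\Bbb{R}^2$ take $C_1$ to be the ray through $(0,1)$: then $(1,1)$ and $(-1,1)$ lie in $D_e\setminus C_1$ but their sum $(0,2)$ does not, and the glued set contains $(0,1)$ and $(1,-1)$ but not their sum $(1,0)$. So some property of the $X_i$ beyond ``their cones union to a convex subcone of $D_e$'' must enter, and your argument never identifies it.

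The missing ingredient is Lemma \ref{duality}. The paper sets $Y_i:=\Phi_n^+\setminus X_i$, which lies in $\mathcal{L}_n$ by that lemma, so each $\widehat{Y}_i$ is a convex cone; since $\widehat{X}_i\cup\widehat{Y}_i=D_e$ and $\widehat{X}_i\cap\widehat{Y}_i=\{0\}$ (Lemma \ref{hatcone}), the complementary piece is exactly $C_Y=\bigcap_{i\in I}\widehat{Y}_i$, which is convex simply because it is an intersection of convex cones --- no directedness needed for this part at all. With \emph{both} $C_X$ and $C_Y$ known to be convex, the mixed-sum argument goes through: for $u\in C_X$, $v\in -C_Y$, if $u+v=-u_1$ with $u_1\in C_X$ then $-v=u+u_1\in C_X\cap C_Y=\{0\}$, a contradiction; if instead $u+v=v_1\in C_Y$ then $u=v_1+(-v)\in C_Y$ by convexity of $C_Y$, forcing $u=0$ and then $v=0$, again a contradiction. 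The second case is precisely where convexity of $C_Y$ is indispensable, and it is exactly what your proposal leaves unproved. If you replace your gluing paragraph by this duality argument, your proof becomes the paper's.
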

\begin{proof} For each $X_i, i \in I$, we have $X_i = D_{X_i} \cap D_e \cap S^n$. Let $C_{X_i} = D_{X_i} \cap D_e $, then 
$C_{X_i}$ is a convex cone and by Lemma \ref{hatcone}, $\widehat{X}_i = C_{X_i}$. 
To verify  that $C_X$ is a convex cone, we need only verify that condition (3) of Definition \ref{cones} holds, since  conditions (1) and (2)  are obvious. 
 Given $u, v \in C_X$, we must  have 
$\widehat{X}_i, \widehat{X}_j$ with $u \in \widehat{X}_i$ and $v \in \widehat{X}_j$ for some $i, j \in I$. Since $\{\widehat{X}_i\}_{i \in I}$ is  a directed set, there exists
$\widehat{X}_k, k \in I$, with $\widehat{X}_i \subseteq \widehat{X}_k$ and  $\widehat{X}_i \subseteq \widehat{X}_k$. Therefore  $u, v \in \widehat{X}_k$ and since $ \widehat{X}_k$ is a convex cone,  we have $u + v \in \widehat{X}_k \subseteq C_X$. Thus $C_X$ is a 
convex cone.  

Let $X = \bigcup_{i\in I} X_i$. We have $X  = C_X \cap S^n$. 
We will  show that there is a maximal pointed convex cone $D \subset \Bbb{R}^{n + 1}$ such that $X  = D \cap \Phi^+_{n }$. 
For each $i \in I$, let $Y_i = \Phi^+_{n} \backslash X_i$. Recall that $Y_i \in \mathcal{L}_{n }$. Hence $\widehat{Y}_i$ is a convex cone for each $i \in I$ and clearly 
$C_Y = \bigcap_{i \in I} \hat{Y}_i$ is also a convex cone with $C_Y\cap C_X = \{0\}$ and $C_X \cup C_Y = D_e$. We let $D = C_X \cup (-C_Y)$. We have $\Bbb{R}^{n + 1} = C_X \cup (-C_Y) \cup C_Y \cup (-C_X)  = D \cup (-D)$. It is clear that $D \cap (-D) = \{0\}$ and that 
$D\cup (-D) = D_e \cup (- D_e) = \Bbb{R}^{n + 1}$. We show that $D$ is a maximal pointed convex cone. 

Conditions (1) and (2) 
of Definition  \ref{cones} are obvious. 
 We need only show that if $u, v, \in D$, then $u + v$ is also in $D$. Since both $C_X$ and $(-C_Y)$ each have property (3), we can assume that $u \in C_X$ and $v \in (-C_Y)$. We can also assume that $u \not= 0$ and $v \not=0$, since otherwise the result is trivial. If $u + v \not\in C_X \cup (-C_Y)$, then we must have $u + v \in (-C_X) \cup C_Y$ and 
 $u + v \in (-C_X)$ or $u + v \in   C_Y$. If $u + v = -u_1$ for some $u_1 \in C_X$, we get $u + u_1 = -v \in C_X \cap (-C_Y) = \{0\}$, implying that $v = 0$ and  giving us a contradiction. Similarly if $u + v = v_1$ where $v_1 \in C_Y$, we get a contradiction. Therefore 
 $u + v  \in C_X \cup (-C_Y) = D$ and $D$ is a maximal pointed convex set. 
 Clearly $D\cap \Phi_n^+ =  C_X \cap \Phi_n^+ = X$ and $X \in \mathcal{L}_n$.  
  Since $X = \bigcup_{i \in I}X_i$, we must have $X  = \bigvee \{X_i\}_{i \in I}$. 
\end{proof}


\begin{thebibliography}{1}





\bibitem{BnB}
 Bj{\"o}rner, A. and Brenti, F.
\newblock {\em Combinatorics of {C}oxeter groups}. 
\newblock Graduate Texts in Mathematics, Springer, New York,  2005.

\bibitem{DnP}
 Davey, B. A., Priestly, H. A.
\newblock {\em Introduction to Lattices and Order}. 
\newblock Cambridge University Press, Second edition, 2002.

\bibitem{Dyer}
 Dyer, M.
\newblock Groupoids, Root Systems and Weak Order I.
\newblock Preprint.



\bibitem{Fenchel}
Fenchel, W.,  
\newblock {\em Convex cones, sets, and functions}.
\newblock Notes by D. W. Blackett of lectures at Princeton University. 


\end{thebibliography}
\end{document}